\newtheorem{theorem}{Theorem}
\newtheorem{corollary}[theorem]{Corollary}
\newtheorem{remark}{Remark}
\newcommand{\differential}{{\rm{d}}}
\renewcommand{\det}{{\mathrm{det}}}
\newcommand{\blkdiag}{\mathrm{blkdiag}}
\title{\LARGE\textbf{
Boundary and Taxonomy of Integrator Reach Sets}
}
\author{Shadi Haddad, Abhishek Halder
\thanks{Shadi Haddad and Abhishek Halder are with the Department of Applied Mathematics, University of California, Santa Cruz, CA 95064, USA,
        {\tt\small{\{shhaddad,ahalder\}@ucsc.edu}}%
}}
\begin{document}

\maketitle
\pagenumbering{arabic}

\begin{abstract}
Over-approximating the forward reach sets of controlled dynamical systems subject to set-valued uncertainties is a common practice in systems-control engineering for the purpose of performance verification. However, specific algebraic and topological results for the geometry of such sets are rather uncommon even for simple linear systems such as the integrators. This work explores the geometry of the forward reach set of the integrator dynamics subject to box-valued uncertainties in its control inputs. Our contribution includes derivation of a closed-form formula for the support functions of these sets. This result, then enables us to deduce the parametric as well as the implicit equations describing the exact boundaries of these reach sets. Specifically, the implicit equations for the bounding hypersurfaces are shown to be given by vanishing of certain Hankel determinants. Finally, it is established that these sets are semialgebraic as well as translated zonoids. Such results should be useful to benchmark existing reach set over-approximation algorithms, and to help design new algorithms for the same.
\end{abstract}


\section{Introduction}\label{sec:introduction}
Consider the forward reach set $\mathcal{R}$ for the integrator dynamics in Brunovsky normal form with $d$ states and $m$ inputs given by
\begin{align}
\dot{\bm{x}} = \underbrace{\blkdiag\left(\bm{A}_{1},\hdots,\bm{A}_{m}\right)}_{=:\bm{A}}\bm{x} + \underbrace{\blkdiag\left(\bm{b}_{1},\hdots,\bm{b}_{m}\right)}_{=:\bm{B}}\bm{u},    
\label{IntegratorODE}    
\end{align}
with a relative degree vector $\bm{r}=(r_1,\hdots,r_m)^{\top}\in\mathbb{N}^{m}$ (i.e., an $m\times 1$ vector of natural numbers) where $r_{1}+\hdots + r_{m} = d$. In (\ref{IntegratorODE}), the symbol $\blkdiag\left(\cdot\right)$ denotes a block diagonal matrix whose arguments constitute its diagonal blocks, and 
\begin{align}
\boldsymbol{A}_{j}:=\left(\mathbf{0}_{r_{j} \times 1}\left|\boldsymbol{e}_{1}^{r_{j}}\right| \boldsymbol{e}_{2}^{r_{j}}|\ldots| \boldsymbol{e}_{r_{j}-1}^{r_{j}}\right), \quad \boldsymbol{b}_{j}:=\boldsymbol{e}_{r_{j}}^{r_{j}},
\label{DefBlocks}    
\end{align}
for $j=1,\hdots,m$. In (\ref{DefBlocks}), the notation $\mathbf{0}_{r_{j} \times 1}$ stands for the column vector of zeros of size $r_{j} \times 1$. For $k\leq\ell$, we use $\bm{e}_{k}^{\ell}$ to denote the $k$th standard basis (column) vector in $\mathbb{R}^{\ell}$.

As a concrete example, consider $d=5$, $m=2$, and $\bm{r}=(2,3)^{\top}$. Then \eqref{IntegratorODE} takes the form
\begin{align}
\dot{\bm{x}} = \left(\begin{array}{@{}cc|ccc@{}}
0 & 1 & 0 & 0 & 0\\
0 & 0 & 0 & 0 & 0\\ \hline
0 & 0 & 0 & 1 & 0\\
0 & 0 & 0 & 0 & 1\\
0 & 0 & 0 & 0 & 0\\
\end{array}\right)
\bm{x} + \left(\begin{array}{@{}c|c@{}}
0 & 0\\
1 & 0\\ \hline
0 & 0\\
0 & 0\\
0 & 1
\end{array}\right)\bm{u}.   
\label{Example2block3block}
\end{align}

We suppose that the initial condition $\bm{x}_{0}\in\mathbb{R}^{d}$ for (\ref{IntegratorODE}) is fixed, and the control input $\bm{u}\in\mathcal{U}\subset\mathbb{R}^{m}$ with box-valued input set $\mathcal{U}$. The set-valued uncertainties in the input $\bm{u}$ entails uncertainties in the state evolution for (\ref{IntegratorODE}).

Specifically, the state vector $\bm{x}(t)\in\mathcal{R}\left(\{\bm{x}_{0}\},t\right)\subset\mathbb{R}^{d}$, where the set $\mathcal{R}\left(\{\bm{x}_{0}\},t\right)$ is the forward reach set at time $t$ resulting from the singleton initial condition set $\{\bm{x}_0\}$ subject to (\ref{IntegratorODE}), i.e., the set of states the controlled dynamics (\ref{IntegratorODE}) may reach at time $t$ starting from the initial condition $\bm{x}_{0}$. 

Our motivation behind studying the geometry of integrator reach sets is twofold. \begin{itemize}
    \item[(i)] Integrators are simple linear time invariant systems that appear frequently in the literature (see e.g., \cite{maksarov1996state,AROCManual}, \cite[Ch. 3 and 4]{kurzhanski2014dynamics}) to benchmark the performance of reach set over-approximation algorithms. In this context, the lack of knowledge of exact geometry of these sets implies the lack of ground truth against which the conservatism of different over-approximation algorithms can be quantified. Consequently, one has to contend with graphical or statistical performance results. The present work remedies this by deriving the \emph{exact} geometry of integrator reach sets. 
    \item[(ii)] Integrator dynamics in Brunovsky normal form also arise from differentially flat nonlinear systems \cite{fliess1995flatness,murray1995,fliess1999lie}, which in turn appear frequently in vehicular dynamics and control applications. Addressing safety and collision avoidance for such systems in the presence of set-valued uncertainties, then requires computing their (typically nonconvex) reach sets. Having an analytical handle of the corresponding integrator reach sets in the normal coordinates, together with the knowledge of coordinate transforms, could enable the design of novel reach set computation algorithms for this class of nonlinear systems.  
\end{itemize}

It is known \cite{varaiya2000reach} that the set $\mathcal{R}\left(\{\bm{x}_{0}\},t\right)$ is compact and convex. Our prior work \cite{haddad2020convex} investigated the convex geometry of the integrator reach set for \emph{single input case} under the assumption that the set $\mathcal{U}$ is a \emph{symmetric interval} of the form $[-\mu,\mu]\subset\mathbb{R}$. The development herein generalizes that work--we now allow multiple generic interval-valued inputs, i.e., $\mathcal{U}$ is a box of the form 
\begin{align}
\left[\alpha_1,\beta_1\right]\times \left[\alpha_2,\beta_2\right]\times \hdots \left[\alpha_m,\beta_m\right]\subset\mathbb{R}^{m}.
\label{BoxInputSet}    
\end{align}
The box \eqref{BoxInputSet} can be seen as an over-approximation of arbitrary compact   $\mathcal{U}\subset\mathbb{R}^{m}$ with $\alpha_j := \underset{\bm{u}\in\mathcal{U}}{\min} \; u_{j}, \quad \beta_j := \underset{\bm{u}\in\mathcal{U}}{{\max} \; u_{j}}$. The reach set resulting from \eqref{BoxInputSet} will be an over-approximation of that resulting from such compact $\mathcal{U}$.

With this more generic set up, we obtain exact descriptions of the boundary $\partial\mathcal{R}\left(\{\bm{x}_{0}\},t\right)$, which are novel results. Furthermore, we show that the convex set $\mathcal{R}$ is semialgebraic, translated zonoid, and not a spectrahedron.

One may describe $\mathcal{R}\left(\{\bm{x}_{0}\},t\right)$ via set-valued operation as
\begin{align}
\mathcal{R}\left(\{\bm{x}_{0}\},t\right) = \exp\left(t\bm{A}\right)\bm{x}_{0} \dotplus \int_{0}^{t} \exp\left(s\bm{A}\right)\bm{B}\mathcal{U}\:\differential s,  
\label{ReachSet}    
\end{align}
with each diagonal block being upper triangular, given by
\begin{align}
\exp(\bm{A}_{j}(t-s)) := \begin{cases}
 \dfrac{(t-s)^{\ell-k}}{(\ell-k)!} & \text{for}\; k\leq\ell,\\
 0 & \text{otherwise}.	
 \end{cases}
\label{diagBlocksOfSTM}	
\end{align}
for $k,\ell = 1,\hdots,r_{j}$, for each $j=1,\hdots,m$.
The second summand in (\ref{ReachSet}) is a set-valued Aumann integral and $\dotplus$ stands for the Minkowski sum. In other words, $\mathcal{R}\left(\{\bm{x}_{0}\},t\right)$ is simply a translation of the set given by the second summand in (\ref{ReachSet}). From here, it is not clear if a more explicit description of $\mathcal{R}$ is possible or not. Moreover, a description at the level of (\ref{ReachSet}) is not very helpful for computation. Even at the analysis level, it is not immediate exactly what kind of convex set the second summand in (\ref{ReachSet}) is. In this paper, we show that these issues can be addressed in surprisingly explicit manner.

In Sec. \ref{sec:SupportFunctionSec}, we deduce the support function of the forward reach set of (\ref{IntegratorODE}) subject to box-valued uncertainties in its control input. The development in Sec. \ref{sec:SupportFunctionSec} allows us to derive exact parametric (Sec. \ref{subsec:ParamBoundary}) as well as implicit equations (Sec. \ref{subsec:ImplicitizationSec}) for the boundary of this forward reach set. In Sec. \ref{sec:Classification}, we use the results from Sec. \ref{sec:Boundary} to show that this reach set is semialgebraic but not a spectrahedron. Sec. \ref{sec:Conclusion} concludes the paper.


\section{Support Function of the Integrator Reach Set}\label{sec:SupportFunctionSec} 
The support function $h_{\mathcal{K}}:\mathbb{R}^{d}\mapsto\mathbb{R}$ of a compact convex set $\mathcal{K} \subset \mathbb{R}^{d}$, is given by
\begin{align}
h_{\mathcal{K}}(\bm{y}) := \underset{\bm{x}\in\mathcal{K}}{\sup}\:\{\langle\bm{y},\bm{x}\rangle \mid \bm{y}\in\mathbb{R}^{d}\},
\label{DefSptFn}	
\end{align}
where $\langle\cdot,\cdot\rangle$ denotes the standard Euclidean inner product. Geometrically, $h_{\mathcal{K}}(\bm{y})$ gives the signed distance of the supporting hyperplane of $\mathcal{K}$ with outer normal vector $\bm{y}$, measured from the origin. the distance is negative if an only if $\bm{y}$ points into the open halfspace containing the origin. Furthermore, the supporting hyperplane at $\bm{x}^{\text{bdy}}\in\partial\mathcal{K}$ is $\langle\bm{y},\bm{x}^{\text{bdy}}\rangle = h_{\mathcal{K}}(\bm{y})$, and we can write
\[\mathcal{K} = \big\{\bm{x}\in\mathbb{R}^{d} \mid \langle\bm{y},\bm{x}\rangle \leq h_{\mathcal{K}}(\bm{y})\;\text{for all}\;\bm{y}\in\mathbb{R}^{d}\big\}.\]


Given a function $f:\mathbb{R}^{d}\mapsto\mathbb{R}\cup\{+\infty\}$, its Legendre-Fenchel conjugate is 
\begin{align}
f^{*}(\bm{y}) := \underset{\bm{x}\in\operatorname{domain}(f)}{\sup}\:\{\langle\bm{y},\bm{x}\rangle - f(\bm{x}) \mid \bm{y}\in\mathbb{R}^{d}\}.
\label{LegFenchelConjugate}	
\end{align}
From (\ref{DefSptFn})-(\ref{LegFenchelConjugate}), it follows that $h_{\mathcal{K}}(\bm{y})$ is the Legendre-Fenchel conjugate of the indicator function
\[\mathbf{1}_{\mathcal{K}}(\bm{x}) := \begin{cases}0 & \text{if}\quad \bm{x}\in\mathcal{K},\\
+\infty &\text{otherwise.}\end{cases}\]
Thus, the support function $h_{\mathcal{K}}(\bm{y})$ uniquely determines the set $\mathcal{K}$. Since the indicator function of a convex set is a convex function, the biconjugate 
\begin{align}
\mathbf{1}_{\mathcal{K}}^{**}(\cdot) = h_{\mathcal{K}}^{*}(\cdot) = \mathbf{1}_{\mathcal{K}}(\cdot).
\label{Biconjugate}    
\end{align}

In (\ref{IntegratorODE}), since the system matrices are block diagonal, for the input set \eqref{BoxInputSet}, each of the $m$ single input integrator dynamics with $r_{j}$ dimensional state subvectors for $j=1,\hdots,m$, are decoupled from each other. Hence $\mathcal{R}\left(\{\bm{x}_{0}\},t\right)\subset\mathbb{R}^{d}$ is the Cartesian product of these single integrator reach sets: $\mathcal{R}_{j}\left(\{\bm{x}_{0}\},t\right) \subset \mathbb{R}^{r_{j}}$ for $j=1,\hdots,m$, i.e.,
\begin{align}
\mathcal{R}=\mathcal{R}_{1}\times\mathcal{R}_{2}\times\hdots\times\mathcal{R}_{m}.
\label{CartesianProduct}	
\end{align}
Notice that we can also write (\ref{CartesianProduct}) as a Minkowski sum $\mathcal{R}_{1} \dotplus \hdots \dotplus \mathcal{R}_{m}$. Since the support function of the Minkowski sum is the sum of support functions, this allows us to write the support function of the reach set at time $t$ as
\begin{align}
&h_{\mathcal{R}\left(\{\bm{x}_{0}\},t\right)}(\bm{y}) = \sum_{j=1}^{m} h_{\mathcal{R}_{j}\left(\{\bm{x}_{0}\},t\right)}(\bm{y}_{j}).
\label{SptFn}    
\end{align}

In the following, it will be helpful to define
\begin{align}
\boldsymbol{\xi}_{j}(s):=\left(\begin{array}{c}
s^{r_{j}-1} /\left(r_{j}-1\right) ! \\
s^{r_{j}-2} /\left(r_{j}-2\right) ! \\
\vdots \\
s \\
1
\end{array}\right),\quad j=1,\hdots, m,
\label{DefXij}    
\end{align}
and 
\[\bm{\zeta}_{j}(t) := \int_{0}^{t}\boldsymbol{\xi}_{j}(s)\differential s \in \mathbb{R}^{r_{j}}.\]
Also, let
\begin{align}
\mu_{j}:=\frac{\beta_{j}-\alpha_{j}}{2}, \quad \nu_{j}:=\frac{\beta_{j}+\alpha_{j}}{2}.
\label{defmujnuj}	
\end{align}
We have the following result.
\begin{theorem}
The support function of the forward reach set $\mathcal{R}(\{\bm{x}_{0}\},t)$ for (\ref{IntegratorODE}) at time $t$, for the input set \eqref{BoxInputSet}, is
\begin{align}
h_{\mathcal{R}(\{\bm{x}_{0}\},t)}\left(\bm{y}\right) = &\sum_{j=1}^{m}\bigg\langle\bm{y}_{j},\exp\left(t\bm{A}_{j}\right)\bm{x}_{j0} + \nu_j\bm{\zeta}_{j}(t)\bigg\rangle \nonumber\\
&+ \mu_j\int_{0}^{t}|\langle \bm{y}_{j},\bm{\xi}_{j}(s) \rangle|\:\differential s,
\label{jthsupportfunction}    
\end{align}
where $\bm{x}_{j0}$ is the $r_{j}$ dimensional subvector of the initial condition $\bm{x}_{0}\in\mathbb{R}^{d}$ corresponding to the relative degree component $r_{j}$, where $j=1,\hdots,m$.
\end{theorem}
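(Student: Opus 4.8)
The plan is to exploit the Cartesian-product structure already recorded in \eqref{CartesianProduct} and \eqref{SptFn}, which reduces the task to evaluating the support function $h_{\mathcal{R}_{j}(\{\bm{x}_{0}\},t)}(\bm{y}_{j})$ of a single-block reach set $\mathcal{R}_{j}(\{\bm{x}_{0}\},t)\subset\mathbb{R}^{r_{j}}$, and then to compute that support function by optimizing the control pointwise in time.

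First I would specialize \eqref{ReachSet} to the $j$th diagonal block, writing $\mathcal{R}_{j}(\{\bm{x}_{0}\},t)=\exp(t\bm{A}_{j})\bm{x}_{j0}\dotplus\int_{0}^{t}\exp(s\bm{A}_{j})\bm{b}_{j}\,\mathcal{U}_{j}\,\differential s$, where the second summand is a set-valued Aumann integral and $\mathcal{U}_{j}$ is the $j$th coordinate projection of $\mathcal{U}$. Reading off the explicit block exponential from \eqref{diagBlocksOfSTM} shows $\exp(s\bm{A}_{j})\bm{b}_{j}=\bm{\xi}_{j}(s)$, the vector in \eqref{DefXij}. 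Combining this with \eqref{SptFnAffineTransform} gives $h_{\mathcal{R}_{j}(\{\bm{x}_{0}\},t)}(\bm{y}_{j})=\langle\bm{y}_{j},\exp(t\bm{A}_{j})\bm{x}_{j0}\rangle+\sup\{\int_{0}^{t}\langle\bm{y}_{j},\bm{\xi}_{j}(s)\rangle\,u_{j}(s)\,\differential s\}$, the supremum taken over all measurable controls $u(\cdot):[0,t]\to\mathcal{U}$ (only the $j$th component $u_{j}(\cdot)$ enters).

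The crux is the interchange of this supremum with the integral, i.e.\ that the support function of an Aumann integral equals the integral of the support functions: $\sup_{u(\cdot)}\int_{0}^{t}\langle\bm{y}_{j},\bm{\xi}_{j}(s)\rangle u_{j}(s)\,\differential s=\int_{0}^{t}\sup_{\bm{u}\in\mathcal{U}}\langle\bm{y}_{j},\bm{\xi}_{j}(s)\rangle u_{j}\,\differential s$. The ``$\leq$'' direction is trivial; for ``$\geq$'' one needs the pointwise maximizing selection $s\mapsto u(s)$ to be measurable, which here is immediate because $s\mapsto\langle\bm{y}_{j},\bm{\xi}_{j}(s)\rangle$ is a polynomial of degree at most $r_{j}-1$, hence has finitely many sign changes on $[0,t]$, so the maximizing control is piecewise constant, taking a value attaining $\beta_{j}=\max_{\bm{u}\in\mathcal{U}}u_{j}$ where $\langle\bm{y}_{j},\bm{\xi}_{j}(s)\rangle>0$ and a value attaining $\alpha_{j}=\min_{\bm{u}\in\mathcal{U}}u_{j}$ elsewhere (both extrema are attained since $\mathcal{U}$ is compact). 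This also disposes of the possible nonconvexity of $\mathcal{U}$: only the scalar extremes $\alpha_{j},\beta_{j}$ matter, so $\sup_{\bm{u}\in\mathcal{U}}\langle\bm{y}_{j},\bm{\xi}_{j}(s)\rangle u_{j}=\max\{\alpha_{j}\langle\bm{y}_{j},\bm{\xi}_{j}(s)\rangle,\ \beta_{j}\langle\bm{y}_{j},\bm{\xi}_{j}(s)\rangle\}$.

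Finally I would substitute the elementary identity $\max\{\alpha a,\beta a\}=\tfrac{\beta+\alpha}{2}a+\tfrac{\beta-\alpha}{2}|a|$ with $a=\langle\bm{y}_{j},\bm{\xi}_{j}(s)\rangle$, recognize $\nu_{j},\mu_{j}$ from \eqref{defmujnuj}, and use $\int_{0}^{t}\bm{\xi}_{j}(s)\,\differential s=\bm{\zeta}_{j}(t)$ to obtain $h_{\mathcal{R}_{j}(\{\bm{x}_{0}\},t)}(\bm{y}_{j})=\langle\bm{y}_{j},\exp(t\bm{A}_{j})\bm{x}_{j0}+\nu_{j}\bm{\zeta}_{j}(t)\rangle+\mu_{j}\int_{0}^{t}|\langle\bm{y}_{j},\bm{\xi}_{j}(s)\rangle|\,\differential s$; summing over $j$ through \eqref{SptFn} then yields \eqref{jthsupportfunction}. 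I expect the only nontrivial point to be the sup--integral interchange (the Aumann-integral support-function identity together with the reduction from $\mathcal{U}$ to its scalar extremes); the rest is bookkeeping with the explicit exponential and the one-line convexity identity.
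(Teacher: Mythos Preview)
Your proof is correct and follows essentially the same route as the paper: reduce to single blocks via \eqref{SptFn}, split off the translation with \eqref{SptFnAffineTransform}, compute the support function of the Aumann integral, and apply the identity $\max\{\alpha a,\beta a\}=\nu a+\mu|a|$ to obtain the $\nu_{j}$--$\mu_{j}$ decomposition. The only noteworthy difference is that where the paper invokes an external proposition for the sup--integral interchange, you justify it directly by exhibiting a piecewise-constant (hence measurable) maximizing selection based on the finitely many sign changes of the polynomial $\langle\bm{y}_{j},\bm{\xi}_{j}(s)\rangle$; this makes your argument slightly more self-contained but is otherwise the same idea.
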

\begin{proof}
Since support function is distributive over sum, we have
\begin{align}
h_{\mathcal{R}_{j}(\{\bm{x}_{0}\},t)}\left(\bm{y}_{j}\right) = &\langle\bm{y}_{j},\exp\left(t\bm{A}_{j}\right)\bm{x}_{j0}\rangle\nonumber\\ 
&+ h_{\int_{0}^{t}\bm{\xi}_{j}(s)[\alpha_j,\beta_j]\differential s}(\bm{y}_{j}).
\label{hRj}
\end{align}
The second summand in the right hand side above, is the support function of the set $\int_{0}^{t}\bm{\xi}_{j}(s)[\alpha_j,\beta_j]\differential s$, which we simplify next.

From the definition of support function, we have
\begin{align}
h_{\bm{\xi}_{j}(s)[\alpha_j,\beta_j]}(\bm{y}_{j}) = \underset{u_{j}\in[\alpha_j,\beta_j]}{\sup}\:\langle\bm{y}_{j}, \bm{\xi}_{j}(s)u_{j}\rangle.
\label{hjwithoutintegral}    
\end{align}
Since $\mathcal{U}$ is compact, the optimizer $u_{j}^{\text{opt}}$ in (\ref{hjwithoutintegral}) is a maximizer, and is given by 
\begin{align}
u_{j}^{\text{opt}} = \begin{cases}
\beta_{j} & \text{for} \quad \langle\bm{y}_{j},\bm{\xi}_{j}(s)\rangle \geq 0,\\
\alpha_{j} & \text{for} \quad \langle\bm{y}_{j},\bm{\xi}_{j}(s)\rangle < 0.
\end{cases}
\label{ujopt}    
\end{align}
In terms of the Heaviside step function $H(\cdot)$, we can write 
\begin{align*}
u_{j}^{\text{opt}} &= \alpha_j + (\beta_j - \alpha_j) H(\langle\bm{y}_{j},\bm{\xi}_{j}(s)\rangle)\\
&= \alpha_j + (\beta_j - \alpha_j) \times \frac{1}{2}\left(1 + {\rm{sgn}}\left(\langle\bm{y}_{j},\bm{\xi}_{j}(s)\rangle\right)\right),
\end{align*}
where ${\rm{sgn}}(\cdot)$ denotes the signum function. Therefore,
\begin{align}
\!\!\!h_{\bm{\xi}_{j}(s)[\alpha_j,\beta_j]}(\bm{y}_{j}) &= u_{j}^{\text{opt}} \langle\bm{y}_{j},\bm{\xi}_{j}(s)\rangle\nonumber\\
&= \nu_j\langle\bm{y}_{j},\bm{\xi}_{j}(s)\rangle + \mu_j |\langle\bm{y}_{j},\bm{\xi}_{j}(s)\rangle|,
\label{sptfnjbeforelineartransform}
\end{align}
since $|x| = x\:{\rm{sgn}}(x)$ for any real $x$. 

Applying \cite[Proposition 1]{haddad2020convex} to (\ref{sptfnjbeforelineartransform}), we obtain
\begin{align*}
& h_{\int_{0}^{t}\bm{\xi}_{j}(s)[\alpha_j,\beta_j]\differential s}(\bm{y}_{j}) = \int_{0}^{t} \!\!h_{\bm{\xi}_{j}(s)[\alpha_j,\beta_j]}(\bm{y}_{j})\differential s\\
&= \bigg\langle\bm{y}_{j},\nu_j\bm{\zeta}_{j}(t)\bigg\rangle + \mu_j\int_{0}^{t}|\langle \bm{y}_{j},\bm{\xi}_{j}(s) \rangle|\:\differential s,
\end{align*}
which upon substituting back in (\ref{hRj}), yields
\begin{align}
h_{\mathcal{R}_{j}(\{\bm{x}_{0}\},t)}\left(\bm{y}_{j}\right) = &\bigg\langle\bm{y}_{j},\exp\left(t\bm{A}_{j}\right)\bm{x}_{j0} + \nu_j\bm{\zeta}_{j}(t)\bigg\rangle \nonumber\\
&+ \mu_j\int_{0}^{t}|\langle \bm{y}_{j},\bm{\xi}_{j}(s) \rangle|\:\differential s.
\label{lastbutone}    
\end{align}
Combining (\ref{lastbutone}) with (\ref{SptFn}), we arrive at (\ref{jthsupportfunction}).
\end{proof}
\begin{remark}\label{Remark1SptFnFormula}
The formula (\ref{lastbutone}) has a clear geometric meaning. The integral term is the support function of a scaled zonoid, see \cite[Sec. III.A.1]{haddad2020convex}. The set $\mathcal{R}_{j}(\{\bm{x}_{0}\},t)$ is then a translated zonoid, with a time-varying translation of amount $\exp\left(t\bm{A}_{j}\right)\bm{x}_{j0} + \nu_j\bm{\zeta}_{j}(t)$. Notice the extra translation term compared to \cite[equation (10)]{haddad2020convex}.
\end{remark}
\begin{remark}\label{ReachSetOverapprox}
The formula \eqref{jthsupportfunction} upper bounds the support function of integrator reach set resulting from the same initial condition and arbitrary compact   $\mathcal{U}\subset\mathbb{R}^{m}$ with $\alpha_j := \underset{\bm{u}\in\mathcal{U}}{\min} \; u_{j}, \quad \beta_j := \underset{\bm{u}\in\mathcal{U}}{{\max} \; u_{j}}$. In other words, \eqref{CartesianProduct} will over-approximate the reach set associated with the input set $\mathcal{U}$.  
\end{remark}


\section{Boundary of the Integrator Reach Set}\label{sec:Boundary}
In this Section, we build on the developments in Sec. \ref{sec:introduction} and \ref{sec:SupportFunctionSec}, to derive the parametric representation of $\bm{x}^{\text{bdy}}\in\partial\mathcal{R}\left(\{\bm{x}_{0}\},t\right)$, the boundary of the forward reach set for integrator dynamics with box-valued uncertainties in control inputs. We then provide a principled approach to deduce the implicit equations of the boundary.

\subsection{Parameterization of the Boundary}\label{subsec:ParamBoundary}
To derive the parametric equations of the boundary of the integrator reach set (\ref{ReachSet}), we start with { \cite[p. 111]{kurzhanski2014dynamics}}
\begin{align}
h_{\mathcal{R}\left(\{\bm{x}_{0}\},t\right)}^{*}\left(\bm{x}^{\textup{bdy}}\right) = 0,
\label{BiconjugateEqualsToZero}	
\end{align} 
where $h_{\mathcal{R}\left(\{\bm{x}_{0}\},t\right)}^{*}$ denotes the Legendre-Fenchel conjugate of the support function (\ref{lastbutone}). That \eqref{BiconjugateEqualsToZero} yields the equations for the boundary, follows from \eqref{Biconjugate}.

At any $\bm{x}^{\textup{bdy}}\in\partial\mathcal{R}\left(\{\bm{x}_{0}\},t\right)$, consider a supporting hyperplane $\langle\bm{y},\bm{x}^{\textup{bdy}}\rangle = h_{\mathcal{R}\left(\{\bm{x}_{0}\},t\right)}(\bm{y})$ with outward normal $\bm{y}\in\mathbb{R}^{d}$. Using (\ref{SptFn}) and (\ref{lastbutone}), the equation (\ref{BiconjugateEqualsToZero}) in our case, becomes
\begin{align}
&\sum_{j=1}^{m}\:\underset{\bm{y}_{j}\in\mathbb{R}^{r_{j}}}{\inf}\!\bigg\{\!\langle-\bm{x}^{\textup{bdy}}_{j}+\exp\left(t\bm{A}_{j}\right)\bm{x}_{j0}+\nu_{j}\bm{\zeta}_j(t),\bm{y}_{j}\rangle \nonumber\\
&\qquad\qquad\qquad\qquad\qquad + \mu_{j}\!\!\int_{0}^{t}\!\!|\langle\bm{y}_{j},\bm{\xi}_{j}(s)\rangle| \:\differential s\!\bigg
\} = 0.
\label{UnconstrainedMinimizationSeparableSum}	
\end{align}
In the Theorem \ref{Thm:ParametricRepresentationOfBoundaryPoint} that follows, we use (\ref{UnconstrainedMinimizationSeparableSum}) to derive the parametric equations for $\bm{x}^{\text{bdy}}\in\partial\mathcal{R}\left(\{\bm{x}_{0}\},t\right)$.
\begin{theorem}\label{Thm:ParametricRepresentationOfBoundaryPoint}
Consider the reach set (\ref{ReachSet}) with singleton $\mathcal{X}_{0}\equiv\{\bm{x}_{0}\}$ and $\mathcal{U}$ given by \eqref{BoxInputSet}. 
The fixed vector $\bm{x}_{0}\in\mathbb{R}^{d}$ comprises of subvectors $\bm{x}_{j0}\in\mathbb{R}^{r_{j}}$, where $j=1,\hdots,m$, and the relative degree vector $\bm{r}=(r_1,\hdots,r_m)^{\top}\in\mathbb{N}^{m}$. Define $\{\mu_{j}\}_{j=1}^{m}$ and $\{\nu_{j}\}_{j=1}^{m}$ as in (\ref{defmujnuj}). Let the indicator function $\mathbf{1}_{k\leq \ell}:=1$ for $k\leq \ell$, and $:=0$ otherwise.
Then the components of 
\[\bm{x}^{\textup{bdy}}=\begin{pmatrix}
\bm{x}^{\textup{bdy}}_{1}\\
\bm{x}^{\textup{bdy}}_{2}\\
\vdots\\
\bm{x}^{\textup{bdy}}_{m}	
\end{pmatrix}
\in\partial\mathcal{R}\left(\{\bm{x}_{0}\},t\right),\;\bm{x}^{\textup{bdy}}_{j}\in\mathbb{R}^{r_j}, \; j=1,\hdots,m,\] 
admit parameterization
{\small{\begin{align}
&\bm{x}^{\textup{bdy}}_{j}\left(k\right) =  \sum_{\ell = 1}^{r_{j}}\mathbf{1}_{k\leq \ell}\:\frac{t^{\ell-k}}{(\ell-k)!}\:\bm{x}_{j0}(\ell) +\frac{\nu_{j}\:t^{r_{j}-k+1}}{(r_{j}-k+1)!} \nonumber\\
&\pm \frac{\mu_{j}}{(r_{j}-k+1)!}
\bigg\{(-1)^{r_{j}-1}\:t^{r_{j}-k+1}+2\sum_{q=1}^{r_{j}-1}\!\!(-1)^{q+1}\:s_{q}^{r_{j}-k+1}\bigg\},
\label{ParamRepresentationBoundary}	
\end{align}	}}
where $\bm{x}^{\textup{bdy}}_{j}\left(k\right)$ denotes the $k$\textsuperscript{th} component of the $j$\textsuperscript{th} subvector $\bm{x}^{\textup{bdy}}_{j}$ for $k=1,\hdots,r_{j}$. The parameters $(s_{1}, s_{2}, \hdots, s_{r_{j}-1})$ satisfy $0\leq s_{1} \leq s_{2} \leq \hdots \leq s_{r_{j}-1}\leq t$.
\end{theorem}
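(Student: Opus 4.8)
The plan is to describe $\partial\mathcal{R}$ as the image of the map sending an outward normal to the boundary point it exposes --- i.e., the gradient of the support function --- and to evaluate that gradient in closed form via Theorem~1. By the product structure \eqref{CartesianProduct} it suffices to parameterize each $\partial\mathcal{R}_{j}\subset\mathbb{R}^{r_{j}}$. By \eqref{Biconjugate} and \eqref{UnconstrainedMinimizationSeparableSum}, $\bm{x}^{\textup{bdy}}_{j}\in\partial\mathcal{R}_{j}$ iff $\bm{x}^{\textup{bdy}}_{j}$ is a subgradient of $h_{\mathcal{R}_{j}}$ at some outward normal $\bm{y}_{j}\neq\bm{0}$ (equivalently, it lies in the face of $\mathcal{R}_{j}$ exposed by $\bm{y}_{j}$), and such a $\bm{y}_{j}$ exists at every boundary point by the supporting hyperplane theorem. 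I would first observe that $h_{\mathcal{R}_{j}}$ is in fact differentiable at every $\bm{y}_{j}\neq\bm{0}$: in \eqref{lastbutone} the only nonsmooth term is $\bm{y}_{j}\mapsto|\langle\bm{y}_{j},\bm{\xi}_{j}(s)\rangle|$, which is nonsmooth only where the polynomial $p_{j}(s):=\langle\bm{y}_{j},\bm{\xi}_{j}(s)\rangle$ vanishes, and by \eqref{DefXij} one has $\deg p_{j}\leq r_{j}-1$, so for fixed $\bm{y}_{j}\neq\bm{0}$ this occurs only at finitely many $s\in[0,t]$. Differentiating \eqref{lastbutone} under the integral sign then gives the unique exposed point
\begin{align*}
\nabla h_{\mathcal{R}_{j}}(\bm{y}_{j})=\exp(t\bm{A}_{j})\bm{x}_{j0}+\nu_{j}\bm{\zeta}_{j}(t)+\mu_{j}\!\int_{0}^{t}\!\operatorname{sgn}\big(p_{j}(s)\big)\bm{\xi}_{j}(s)\,\differential s,
\end{align*}
whose first two terms, componentwise via \eqref{diagBlocksOfSTM} and $\bm{\zeta}_{j}(t)(k)=t^{r_{j}-k+1}/(r_{j}-k+1)!$, reproduce exactly the part of \eqref{ParamRepresentationBoundary} not multiplied by $\pm\mu_{j}$.

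It then remains to evaluate $\int_{0}^{t}\operatorname{sgn}(p_{j}(s))\bm{\xi}_{j}(s)\,\differential s$. Since $p_{j}\not\equiv0$ and $\deg p_{j}\leq r_{j}-1$, it changes sign at finitely many points $0<\sigma_{1}<\cdots<\sigma_{N}<t$ with $N\leq r_{j}-1$, and $\operatorname{sgn}(p_{j})$ is piecewise constant and alternating on the resulting subintervals. Splitting $[0,t]$ at the $\sigma_{q}$ and using $\int_{0}^{s}\bm{\xi}_{j}(\sigma)(k)\,\differential\sigma=s^{r_{j}-k+1}/(r_{j}-k+1)!$, the sum telescopes to
\begin{align*}
\pm\frac{\mu_{j}}{(r_{j}-k+1)!}\Big\{(-1)^{N}t^{r_{j}-k+1}+2\sum_{q=1}^{N}(-1)^{q+1}\sigma_{q}^{\,r_{j}-k+1}\Big\}
\end{align*}
for the $k$th component, the outer sign being that of $p_{j}$ on $(0,\sigma_{1})$ (both values occur, since $\bm{y}_{j}$ and $-\bm{y}_{j}$ expose opposite boundary points). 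For $N=r_{j}-1$ this is verbatim the $\pm\mu_{j}$ part of \eqref{ParamRepresentationBoundary} with $s_{q}=\sigma_{q}$, which proves the formula on the ``generic'' part of $\partial\mathcal{R}_{j}$.

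The step I expect to require the most care is showing that the degenerate directions with $N<r_{j}-1$ correspond precisely to the boundary of the parameter simplex $\Delta_{j}:=\{0\leq s_{1}\leq\cdots\leq s_{r_{j}-1}\leq t\}$, so that \eqref{ParamRepresentationBoundary} over $\Delta_{j}\times\{+,-\}$ sweeps out all of $\partial\mathcal{R}_{j}$. I would check by direct substitution that setting $s_{r_{j}-1}=t$, or $s_{1}=0$, or $s_{q}=s_{q+1}$ in \eqref{ParamRepresentationBoundary} collapses the affected term(s) and returns exactly the $N=r_{j}-2$ (resp.\ $r_{j}-3$) expression above; iterating, every admissible value of the integral is attained by \eqref{ParamRepresentationBoundary} at some tuple in $\Delta_{j}$, so that $\partial\mathcal{R}_{j}$ is contained in the image of \eqref{ParamRepresentationBoundary}. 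The reverse inclusion follows from continuity: the right side of \eqref{ParamRepresentationBoundary} is a polynomial in $(s_{1},\dots,s_{r_{j}-1})$, and on the interior of $\Delta_{j}$ it equals $\nabla h_{\mathcal{R}_{j}}$ at the $\bm{y}_{j}$ (unique up to scaling) with $p_{j}(s)=\prod_{q}(s-s_{q})$, hence lies in $\partial\mathcal{R}_{j}$; since $\partial\mathcal{R}_{j}$ is closed, this persists on all of $\Delta_{j}$. Reassembling the blocks through \eqref{CartesianProduct} then yields the stated parameterization of $\partial\mathcal{R}$.
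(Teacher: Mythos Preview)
Your approach is essentially the same as the paper's: both characterize boundary points through the support function (you via $\bm{x}^{\textup{bdy}}_{j}=\nabla h_{\mathcal{R}_{j}}(\bm{y}_{j})$, the paper via the equivalent Legendre--Fenchel condition $h^{*}_{\mathcal{R}}(\bm{x}^{\textup{bdy}})=0$ and the ``infimum of a linear function is zero iff the coefficient vanishes'' argument), and both then split $\int_{0}^{t}|\langle\bm{y}_{j},\bm{\xi}_{j}(s)\rangle|\,\differential s$ at the at-most-$(r_{j}-1)$ roots of $p_{j}$ to read off \eqref{ParamRepresentationBoundary}. Your handling of the degenerate directions with $N<r_{j}-1$ and of the reverse inclusion is more explicit than the paper's, which simply notes that when fewer roots occur the extra summand integrals in \eqref{SumOfIntegrals} collapse to zero.
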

\begin{proof}
Each of the $m$ objectives in (\ref{UnconstrainedMinimizationSeparableSum}) contains the absolute value of a polynomial in $s$ that can have at most $r_j-1$ roots and therefore at most  $r_j-1$ sign changes in the interval $[0,t]$. We denote these roots as $s_{1} \leq s_{2} \leq \hdots \leq s_{r_{j}-1}$, and write:
{\small{\begin{align}
&\int_{0}^{t}\!\!\!\!|\langle\bm{y}_{j},\bm{\xi}_{j}(s)\rangle|\:\differential s = \pm\!\int_{0}^{s_{1}}\!\!\!\!\langle \bm{y}_{j},\bm{\xi}_{j}(s)\rangle\:\differential s \mp \!\int_{s_{1}}^{s_{2}}\!\!\!\!\langle \bm{y}_{j},\bm{\xi}_{j}(s)\rangle\:\differential s\nonumber\\
&\qquad\qquad\qquad\quad\quad\quad~~\pm\hdots \pm (-1)^{r_{j}-1}\!\int_{s_{r_{j}-1}}^{t}\!\!\!\!\!\!\!\!\!\langle \bm{y}_{j},\bm{\xi}_{j}(s)\rangle\:\differential s\nonumber\\
&= \langle\bm{y}_{j}, \pm\bm{\zeta}_{j}(0,s_1)\mp\bm{\zeta}_{j}(s_1,s_2)\pm\hdots \pm (-1)^{r_{j}-1}\bm{\zeta}_{j}(s_{r_{j}-1},t)\rangle.
\label{SumOfIntegrals}
\end{align}}}
Expression (\ref{SumOfIntegrals}) holds even if the number of roots in $[0,t]$ is strictly less than $r_j-1$ as in that case, the corresponding summand integrals become zero.

Combining  (\ref{SumOfIntegrals}) and (\ref{UnconstrainedMinimizationSeparableSum}) we obtain
\begin{align}
&\sum_{j=1}^{m}\:\underset{\bm{y}_{j}\in\mathbb{R}^{r_{j}}}{\inf}\langle-\bm{x}^{\textup{bdy}}_{j}+\exp\left(t\bm{A}_{j}\right)\bm{x}_{j0} +\nu_{j}\bm{\zeta}_j(t) \pm\mu_{j}\bm{\zeta}_{j}(0,s_1)\nonumber\\
& \mp\mu_{j}\bm{\zeta}_{j}(s_1,s_2)\pm\hdots\pm\mu_{j} (-1)^{r_{j}-1}\bm{\zeta}_{j}(s_{r_{j}-1},t),\bm{y}_{j}\rangle = 0.
\label{LinearMinimizationyj}	
\end{align}
The left-hand-side of (\ref{LinearMinimizationyj}) is the summation of infimum values of linear functions. Therefore, the right-hand-side of (\ref{LinearMinimizationyj}) can achieve zero if and only if each of those linear function equals to zero, i.e., if and only if
\begin{align}
&\bm{x}^{\textup{bdy}}_{j}=\exp\left(t\bm{A}_{j}\right)\bm{x}_{j0} +\nu_{j}\bm{\zeta}_j(t) \pm\mu_{j}\bm{\zeta}_{j}(0,s_1)\mp\mu_{j}\bm{\zeta}_{j}(s_1,s_2)\nonumber\\
&\qquad\quad\pm\hdots\pm (-1)^{r_{j}-1}\mu_{j}\bm{\zeta}_{j}(s_{r_{j}-1},t).
\label{CoeffEqualToZero}	
\end{align}
Using (\ref{DefBlocks}), (\ref{diagBlocksOfSTM}) and (\ref{DefXij}), we simplify (\ref{CoeffEqualToZero}) to (\ref{ParamRepresentationBoundary}). This completes the proof.
\end{proof}

\noindent The following is an immediate consequence of Theorem \ref{Thm:ParametricRepresentationOfBoundaryPoint}.
\begin{corollary}\label{Corollary:TwoBoundingSurfaces}
The single input integrator reach set $\mathcal{R}_{j}\left(\{\bm{x}_{0}\},t\right) \subset \mathbb{R}^{r_{j}}$ has two bounding surfaces for each $j=1,\hdots,m$. In other words, there exist $p_{j}^{\textup{upper}},p_{j}^{\textup{lower}}:\mathbb{R}^{r_{j}}\mapsto\mathbb{R}$ such that
\[\mathcal{R}_{j}\left(\{\bm{x}_{0}\},t\right) = \{\bm{x}\in\mathbb{R}^{r_{j}}\mid p_{j}^{\textup{upper}}(\bm{x})\leq 0, \; p_{j}^{\textup{lower}}(\bm{x})\leq 0\},\]
with boundary $\partial\mathcal{R}_{j}\left(\{\bm{x}_{0}\},t\right) = \{\bm{x}\in\mathbb{R}^{r_{j}}\mid p_{j}^{\textup{upper}}(\bm{x})=0\}\cup\{\bm{x}\in\mathbb{R}^{r_{j}}\mid p_{j}^{\textup{lower}}(\bm{x})=0\}$. 	
\end{corollary}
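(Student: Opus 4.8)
The plan is to read the two bounding surfaces off the $\pm$ branch structure of the parametric formula~\eqref{ParamRepresentationBoundary} and then turn that parametric description into an implicit one. Fix $j$ and let $\Delta_{j}:=\{(s_{1},\hdots,s_{r_{j}-1})\in\mathbb{R}^{r_{j}-1}\mid 0\le s_{1}\le\hdots\le s_{r_{j}-1}\le t\}$ be the ordered simplex appearing in Theorem~\ref{Thm:ParametricRepresentationOfBoundaryPoint}. Choosing the upper sign in~\eqref{ParamRepresentationBoundary} for every component $k=1,\hdots,r_{j}$ defines a map $\Phi_{j}^{\textup{upper}}:\Delta_{j}\to\mathbb{R}^{r_{j}}$, and choosing the lower sign defines a map $\Phi_{j}^{\textup{lower}}$. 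By Theorem~\ref{Thm:ParametricRepresentationOfBoundaryPoint}, every point of $\partial\mathcal{R}_{j}(\{\bm{x}_{0}\},t)$ equals $\Phi_{j}^{\textup{upper}}(\bm{s})$ or $\Phi_{j}^{\textup{lower}}(\bm{s})$ for some $\bm{s}\in\Delta_{j}$, while both images lie in $\partial\mathcal{R}_{j}$; hence $\partial\mathcal{R}_{j}(\{\bm{x}_{0}\},t)=\operatorname{image}(\Phi_{j}^{\textup{upper}})\cup\operatorname{image}(\Phi_{j}^{\textup{lower}})$. The non-generic sign patterns, coming from normals $\bm{y}_{j}$ for which $\langle\bm{y}_{j},\bm{\xi}_{j}(s)\rangle$ has fewer than $r_{j}-1$ roots in $[0,t]$, contribute only points already in the closures of these two images, by continuity.

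First I would verify that each image is genuinely a hypersurface, so that the word ``surface'' is warranted. The domain $\Delta_{j}$ is $(r_{j}-1)$-dimensional, and on its interior, where the $s_{q}$ are distinct and interior to $(0,t)$, differentiating~\eqref{ParamRepresentationBoundary} shows that the $q$th column of the Jacobian of $\Phi_{j}^{\textup{upper}}$ is proportional to $\big(s_{q}^{\,r_{j}-k}/(r_{j}-k)!\big)_{k=1}^{r_{j}}$; after removing the $k$-dependent row scalings, any $r_{j}-1$ of the $r_{j}$ rows form a nonsingular Vandermonde-type matrix in the distinct $s_{q}$. Hence the Jacobian has full rank $r_{j}-1$ and $\Phi_{j}^{\textup{upper}}$, and likewise $\Phi_{j}^{\textup{lower}}$, is an immersion there. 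Thus each image is a codimension-one subset of $\partial\mathcal{R}_{j}$; since $\mathcal{R}_{j}$ is compact convex with nonempty interior~\cite{varaiya2000reach}, $\partial\mathcal{R}_{j}$ is a topological $(r_{j}-1)$-sphere, and the two images are its ``upper'' and ``lower'' caps, meeting along the common relative boundary $\Phi_{j}^{\textup{upper}}(\partial\Delta_{j})=\Phi_{j}^{\textup{lower}}(\partial\Delta_{j})$.

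Finally I would build $p_{j}^{\textup{upper}}$ and $p_{j}^{\textup{lower}}$ by implicitization. Since $\Phi_{j}^{\textup{upper}}$ maps an $(r_{j}-1)$-dimensional set into $\mathbb{R}^{r_{j}}$, eliminating the $r_{j}-1$ parameters $s_{1},\hdots,s_{r_{j}-1}$ from the $r_{j}$ scalar equations~\eqref{ParamRepresentationBoundary} (upper sign, $k=1,\hdots,r_{j}$) produces a single scalar relation $\widehat{p}_{j}^{\textup{upper}}(\bm{x})=0$ holding on $\operatorname{image}(\Phi_{j}^{\textup{upper}})$ --- this is precisely the elimination carried out explicitly in Section~\ref{subsec:ImplicitizationSec}, where it takes the form of vanishing Hankel determinants --- and the lower sign yields $\widehat{p}_{j}^{\textup{lower}}$. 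One then takes $p_{j}^{\textup{upper}}$ to be $\widehat{p}_{j}^{\textup{upper}}$ (after selecting the relevant factor and, if necessary, trimming its zero set away from $\mathrm{cl}\,\mathcal{R}_{j}$), with sign normalized so that $p_{j}^{\textup{upper}}<0$ on $\operatorname{int}\mathcal{R}_{j}$, and analogously $p_{j}^{\textup{lower}}$; crossing the upper cap then flips only $p_{j}^{\textup{upper}}$ and crossing the lower cap flips only $p_{j}^{\textup{lower}}$, whence $\mathcal{R}_{j}(\{\bm{x}_{0}\},t)=\{p_{j}^{\textup{upper}}\le0\}\cap\{p_{j}^{\textup{lower}}\le0\}$ and $\partial\mathcal{R}_{j}(\{\bm{x}_{0}\},t)=\{p_{j}^{\textup{upper}}=0\}\cup\{p_{j}^{\textup{lower}}=0\}$.

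The main obstacle I anticipate is exactly this last step: showing that the parameters can be eliminated to a \emph{single} equation per branch, that the chosen functions have zero sets meeting $\mathrm{cl}\,\mathcal{R}_{j}$ in exactly the upper and lower caps (so that no spurious component of the elimination variety intrudes), and that a global sign convention can be fixed so that the two inequalities intersect in precisely $\mathcal{R}_{j}$. The limiting argument for the degenerate normals is a comparatively minor point. Both are resolved by the explicit computations of Section~\ref{subsec:ImplicitizationSec}.
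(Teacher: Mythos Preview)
Your approach is essentially the same as the paper's: both proofs read the two bounding surfaces directly off the $\pm$ branch structure in the parametric formula~\eqref{ParamRepresentationBoundary}, with the plus sign yielding $p_{j}^{\textup{upper}}=0$ and the minus sign yielding $p_{j}^{\textup{lower}}=0$, and both defer the actual construction of the implicit functions to Section~\ref{subsec:ImplicitizationSec}. The paper's proof, however, consists of only three sentences making this observation, whereas you supply considerably more detail---the Vandermonde rank argument for the Jacobian, the topological-sphere description of $\partial\mathcal{R}_{j}$, the handling of degenerate normals, and the sign-normalization discussion---none of which appears in the paper's argument.
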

\begin{proof}
From (\ref{ParamRepresentationBoundary}), we get two different parametric representations of $\bm{x}_{j}^{\text{bdy}}$ in terms of $(s_{1}, s_{2}, \hdots, s_{r_{j}-1})$. One parametric representation results from the choice of positive sign for the $\pm$ appearing in (\ref{ParamRepresentationBoundary}), and another for the choice of negative sign for the same. The plus (resp. minus) sign recovers $p_{j}^{\textup{upper}}(\bm{x})=0$ (resp. $p_{j}^{\textup{lower}}(\bm{x})=0$) in parametric representation (\ref{ParamRepresentationBoundary}).
\end{proof}
Let us illustrate the boundary parametrization (\ref{ParamRepresentationBoundary}) for the case $\bm{r} = (r_{1},r_{2})^{\top} =(2,3)^{\top}$. In this case, we get
\begin{figure}[tpb]
        \centering
        \includegraphics[width=0.7\linewidth]{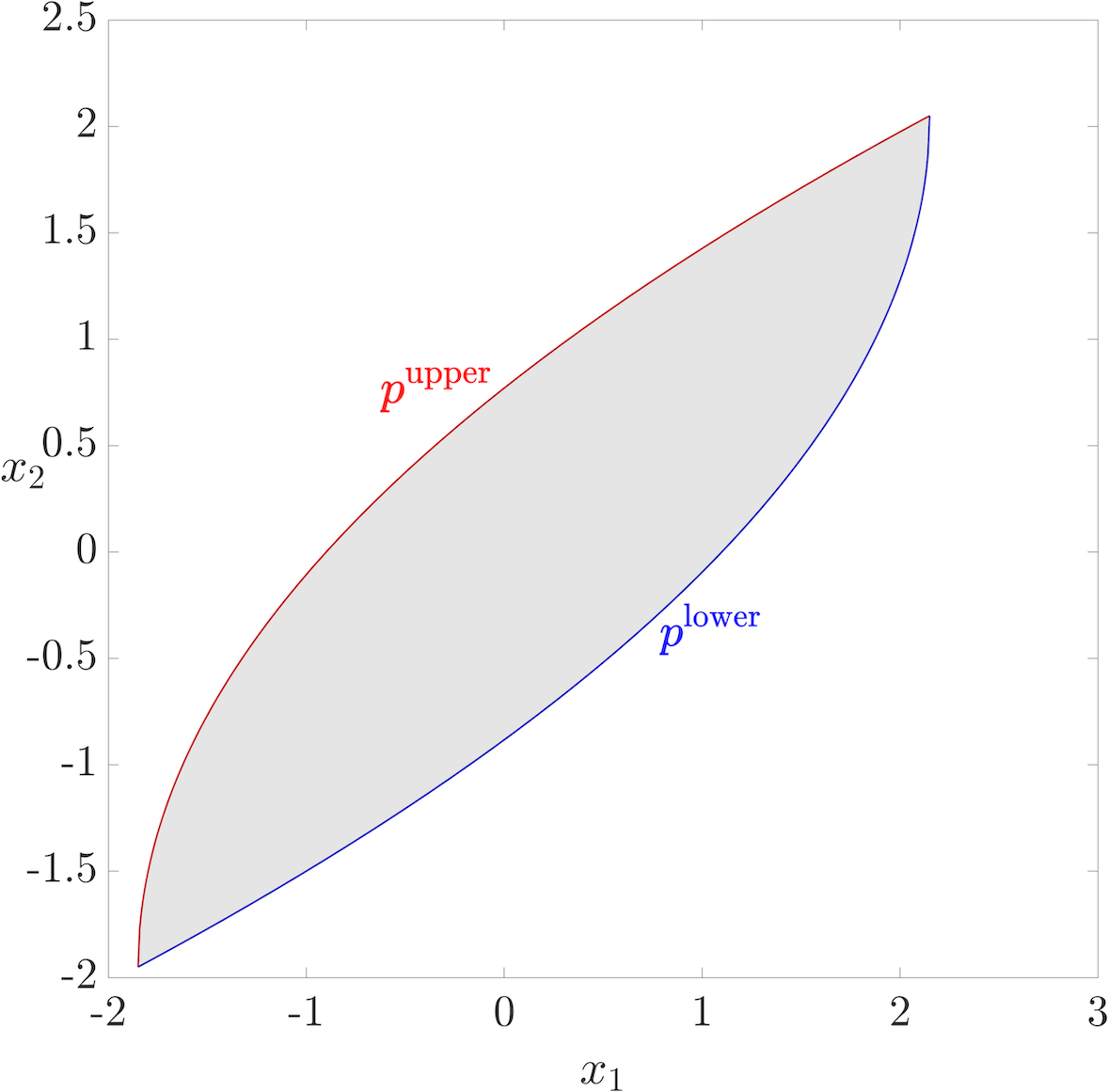}
        \caption{{\small{The integrator reach set $\mathcal{R}(\{\bm{x}_{0}\},t)\subset\mathbb{R}^{2}$ with $d=2$, $m=1$, $\bm{x}_{0}=(0.05,0.05)^{\top}$, $\mathcal{U}\equiv[\alpha,\beta]=[-1,1]$ at $t=2.1$.}}}
\vspace*{-0.2in}
\label{FigSingleInputReachSet2D}
\end{figure}

\begin{figure}[tpb]
        \centering
        \includegraphics[width=0.5\linewidth]{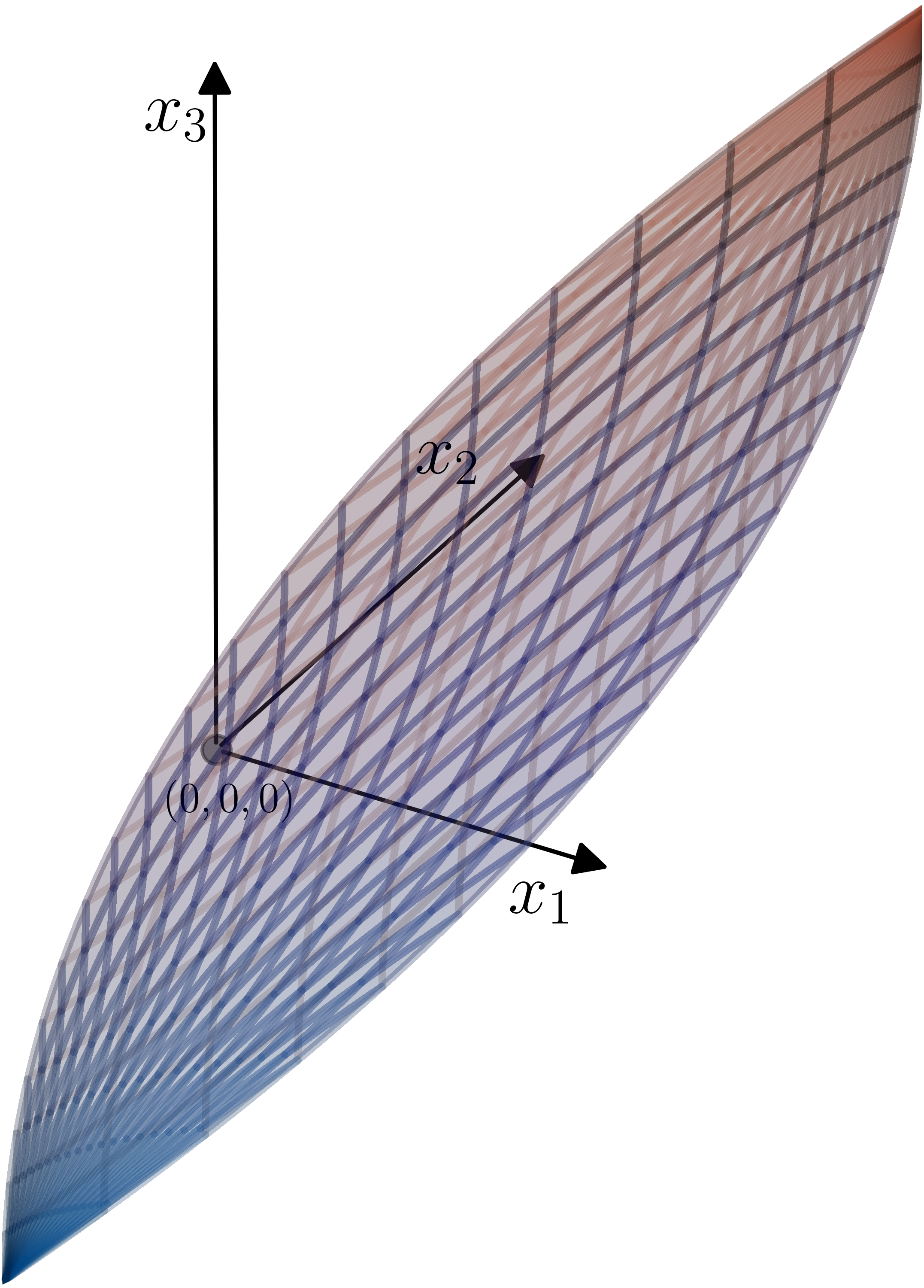}
        \caption{{\small{The integrator reach set $\mathcal{R}(\{\bm{x}_{0}\},t)\subset\mathbb{R}^{3}$ with $d=3$, $m=1$, $\bm{x}_{0} = (0.1,0.2,0.3)^{\top}$, $\mathcal{U}\equiv[\alpha,\beta]=[-1,1]$ at $t=2.1$. The wireframes correspond to the upper and lower surfaces.}}}
\vspace*{-0.2in}
\label{FigSingleInputReachSet3D}
\end{figure}
{\small{\begin{align}
\!\!\begin{pmatrix}
\bm{x}^{\text{bdy}}_{1}(1)\\
\vspace*{-0.05in}\\
\bm{x}^{\text{bdy}}_{1}(2)	
\end{pmatrix}
 \!\!=\!\!\begin{pmatrix}
 \bm{x}_{10}(1) + t\bm{x}_{10}(2) + \nu_{1}(t^{2}/2) \pm\mu_{1}\left(s_{1}^{2}-t^{2}/2\right)\\
 \vspace*{-0.05in}\\
 \bm{x}_{10}(2) + \nu_{1}t \pm \mu_{1}\left(2s_{1}-t\right)	
 \end{pmatrix},
\label{DoubleIntegratorParametric}	
\end{align}}}
and
\begin{align}
\!\!\begin{pmatrix}
\bm{x}^{\text{bdy}}_{2}(1)\\
\vspace*{-0.05in}\\
\bm{x}^{\text{bdy}}_{2}(2)\\
\vspace*{-0.05in}\\
\bm{x}^{\text{bdy}}_{2}(3)	
\end{pmatrix}
 = \begin{pmatrix}
 \bm{x}_{20}(1) + t\bm{x}_{20}(2) + (t^{2}/2)\bm{x}_{20}(3) \\
 + \nu_{2}(t^{3}/6) \pm \mu_{2}\left(t^{3}/6 + 2s_{1}^{3}/6 - 2s_{2}^{3}/6 \right)\\
 \vspace*{-0.05in}\\
 \bm{x}_{20}(2) + t\bm{x}_{20}(3) + \nu_{2}(t^{2}/2) \pm \mu_{2}\left(t^{2}/2 \right.\\
 \left.+ 2s_{1}^{2}/2 - 2s_{2}^{2}/2\right)\\
 \vspace*{-0.05in}\\
  \bm{x}_{20}(3) + \nu_{2}t \pm \mu_{2}\left(t + 2s_{1} - 2s_{2}\right) 
 \end{pmatrix}.\label{TripleIntegratorParametric} 	
\end{align}
In (\ref{DoubleIntegratorParametric}), taking plus (resp. minus) signs in each of component gives the parametric representation of the curve $p_{1}^{\text{upper}} = 0$ (resp. $p_{1}^{\text{lower}}=0$). The union of these curves defines $\partial\mathcal{R}_{1}$ and are depicted in Fig. \ref{FigSingleInputReachSet2D}. We note that the parameterization (\ref{DoubleIntegratorParametric}) appeared in{ \cite[p. 111]{kurzhanski2014dynamics}}.

Likewise, in (\ref{TripleIntegratorParametric}), taking plus (resp. minus) signs in each of component gives the parametric representation of the surface $p_{2}^{\text{upper}}(\bm{x})=0$ (resp. $p_{2}^{\text{lower}}=0$). The resulting set $\mathcal{R}_{2}$ is the triple integrator reach set, and is shown in Fig. \ref{FigSingleInputReachSet3D}.  The single input integrator reach set for $\bm{r} = (r_{1},r_{2})^{\top} =(2,1)^{\top}$ is shown in Fig. \ref{FigTwoInputReachSet3D}.

Next, we deduce the implicit equations of the boundary associated with the parameterization (\ref{ParamRepresentationBoundary}).
\subsection{Implicitization of the Boundary}\label{subsec:ImplicitizationSec}
We recall the concepts of \emph{variety} and \emph{ideal} \cite[Ch. 1]{cox2013ideals}.
Let $p_1,\hdots,p_n\in\mathbb{R}[x_1,\hdots,x_d]$, the vector space of real-valued $d$-variate polynomials. The (affine) {variety} $V_{\mathbb{R}[x_1,\hdots,x_d]}(p_1,\hdots,p_n)$ is the set of all solutions of the system $p_1(x_1,x_2,\hdots,x_d)=\hdots = p_n(x_1,x_2,\hdots,x_d)=0$.

Given $p_1,\hdots,p_n\in\mathbb{R}[x_1,\hdots,x_d]$, the set 
\[I:=\bigg\{\sum_{i=1}^{n}\alpha_i p_i \mid \alpha_{1},\hdots,\alpha_{n}\in\mathbb{R}[x_1,\hdots,x_d]\bigg\} \]
is called the {ideal} generated by $p_1,\hdots,p_n$. We write this symbolically as $I=\langle\langle p_1,\hdots,p_n\rangle\rangle$. Intuitively, $\langle\langle p_1,\hdots,p_n\rangle\rangle$ can be thought of as the set of all polynomial consequences of the given system of $n$ polynomial equations in $d$ indeterminates. 

We notice that (\ref{ParamRepresentationBoundary}) gives polynomial parameterizations of the components of $\bm{x}_{j}^{\text{bdy}}$ for all $j=1,\hdots,m$. In particular, for each $k\in\{1,\hdots,r_{j}\}$, the-right-hand-side of (\ref{ParamRepresentationBoundary}) is a homogeneous polynomial in $r_{j}-1$ parameters $(s_{1},s_{2},\hdots,s_{r_{j}-1})$ of degree $r_{j}-k+1$. By polynomial implicitization \cite[p. 134]{cox2013ideals}, the corresponding implicit equations $p_{j}^{\text{upper}}\left(\bm{x}_{j}^{\text{bdy}}\right)=0$ (when fixing plus sign for $\pm$ in (\ref{ParamRepresentationBoundary})) and $p_{j}^{\text{lower}}\left(\bm{x}_{j}^{\text{bdy}}\right)=0$ (when fixing minus sign for $\pm$ in (\ref{ParamRepresentationBoundary})), must define affine varieties $V_{\mathbb{R}[x_1,...,x_{r_{j}}]}(p_{j}^{\text{upper}}), V_{\mathbb{R}[x_1,...,x_{r_{j}}]}(p_{j}^{\text{lower}})$ in $\mathbb{R}\left[x_1,\hdots,x_d\right]$. Theorem \ref{Thm:Implicitization} below provides the implicit equations associated with the parameterization (\ref{ParamRepresentationBoundary}).
\begin{figure}[t]
        \centering
        \includegraphics[width=0.8\linewidth]{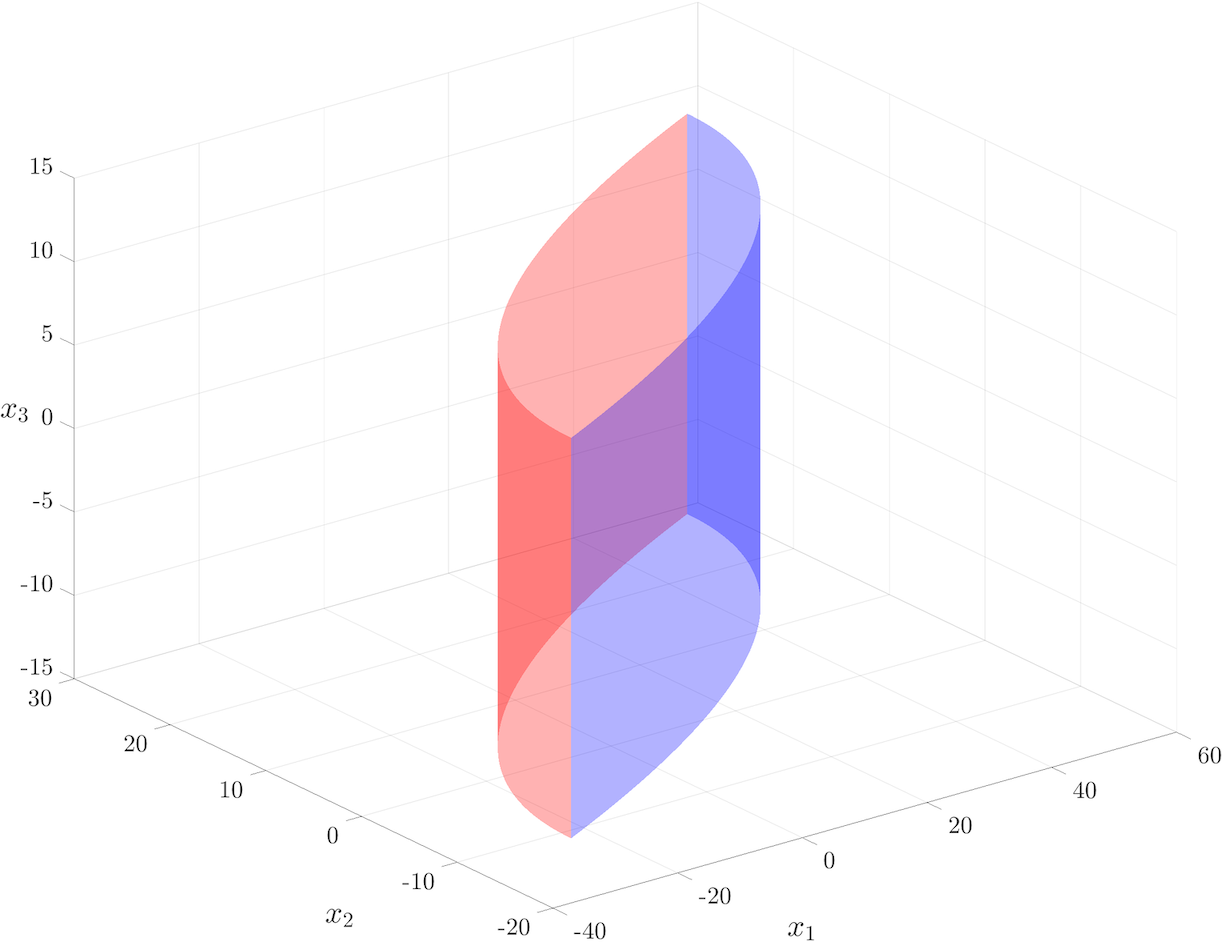}
        \caption{{\small{The integrator reach set $\mathcal{R}(\{\bm{x}_{0}\},t)$ with $d=3$, $m=2$, $\bm{r}=(2,1)^{\top}$, $\bm{x}_{0} = (1,1,0)^{\top}$, $[\alpha_{1},\beta_{1}]=[-5,5]$, $[\alpha_{2},\beta_{2}]=[-3,3]$ at $t=4$.}}}
\vspace*{-0.2in}
\label{FigTwoInputReachSet3D}
\end{figure}
\begin{theorem}\label{Thm:Implicitization}
Consider $d$ dimensional integrator reach set (\ref{ReachSet}) resulting from singleton $\mathcal{X}_{0}\equiv\{\bm{x}_{0}\}$ and $\mathcal{U}$ given by \eqref{BoxInputSet}. 
Denote the corresponding single input integrator reach sets $\mathcal{R}_{1}, \hdots,\mathcal{R}_{m}$ as in (\ref{CartesianProduct}). Let $\delta_j := \lfloor \frac{r_{j}-1}{2}\rfloor$ and 
\begin{align}
\rho_{j,k}^{\pm} := &\frac{(r_{j}-k+1)!}{2\mu_{j}}\bigg\{ \bm{x}^{\textup{bdy}}_{j}\left(k\right) -  \sum_{\ell = 1}^{r_{j}}\mathbf{1}_{k\leq \ell}\:\frac{t^{\ell-k}}{(\ell-k)!}\:\bm{x}_{j0}(\ell)\bigg\}\nonumber\\
&-\frac{1}{2}\bigg\{\pm(-1)^{r_{j}-1}\:t^{r_{j}-k+1}+\frac{\nu_{j}}{\mu_{j}}\:t^{r_{j}-k+1} \bigg\},
\label{defrho}	
\end{align}
where $\bm{r}=(r_1,\hdots,r_m)^{\top}\in\mathbb{N}^{m}$ is the relative degree vector, and $\bm{x}_{0}\in\mathbb{R}^{d}$ comprises of subvectors $\bm{x}_{j0}\in\mathbb{R}^{r_{j}}$, $j=1,\hdots,m$. The algebraic hypersurfaces $p_{j}^{\text{upper}}, p_{j}^{\text{lower}} \in \mathbb{R}[x_1,...,x_{r_{j}}]$ bounding $\mathcal{R}_{j}\left(\{\bm{x}_{0}\},t\right)$ have degree $\left(\delta_j+1  \right)\left( r_j-\delta_j \right)$, and are given by the following vanishing Hankel determinant
\begin{align}
\det[A_j^{\pm}(r_j-2\delta_j+i+j)]_{i,j=0}^{\delta_j}=0,
\label{HankelDet}	
\end{align}
where $A^{\pm}_{j}$'s are obtained from
\begin{align}
\sum_{k\geq 0} A^{\pm}_j(k) \tau^k=\exp\left(-\sum_{k=1}^{r_j}\frac{\lambda_{j}^{\pm}(k)}{k}\tau^{k}\right),
\label{ExponentialOfPolySolve}	
\end{align}
and $\lambda_{j}^{\pm}(k):=\rho_{r_j-k+1}^{\pm}$ for $k=1,\hdots,r_j$.
\end{theorem}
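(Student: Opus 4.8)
The plan is to recognize the parameterization (\ref{ParamRepresentationBoundary}) as a disguised Newton power‑sum map, to read the generating function (\ref{ExponentialOfPolySolve}) as the Taylor series of an explicit rational function whose denominator has degree exactly $\delta_j$, and then to invoke the classical Kronecker criterion: a power series is the expansion of a rational function with denominator degree $\le n$ precisely when its $(n{+}1)\times(n{+}1)$ Hankel determinants, once shifted past the polynomial part, vanish. The degree claim is then a short determinant computation. I carry out the upper branch of $\mathcal{R}_j$ (the $+$ sign in (\ref{ParamRepresentationBoundary})); the lower branch is entirely analogous after the mirror‑image sign bookkeeping. First, substituting (\ref{ParamRepresentationBoundary}) for $\bm{x}^{\textup{bdy}}_{j}(k)$ into (\ref{defrho}), the initial‑condition term, the $\nu_j$‑term, and the $(-1)^{r_j-1}t^{r_j-k+1}$‑term all cancel, leaving
\[
\rho_{j,k}^{+}=\sum_{q=1}^{r_j-1}(-1)^{q+1}s_q^{\,r_j-k+1},\qquad k=1,\dots,r_j,
\]
so that $\lambda_{j}^{+}(k)=\rho_{j,r_j-k+1}^{+}=\sum_{q=1}^{r_j-1}(-1)^{q+1}s_q^{\,k}$ is a signed Newton power sum in the parameters $s_q$.

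Using $\sum_{k\ge1}(s\tau)^k/k=-\log(1-s\tau)$ in the ring of formal power series in $\tau$,
\[
\exp\left(-\sum_{k\ge1}\frac{\lambda_{j}^{+}(k)}{k}\tau^k\right)=\prod_{q=1}^{r_j-1}(1-s_q\tau)^{(-1)^{q+1}}=\frac{\prod_{q\ \mathrm{odd}}(1-s_q\tau)}{\prod_{q\ \mathrm{even}}(1-s_q\tau)}=:R_j(\tau),
\]
a rational function $R_j=N_j/D_j$ with $D_j(0)=1$, whose denominator $D_j$ has degree equal to the number of even integers in $\{1,\dots,r_j-1\}$, i.e. $\lfloor(r_j-1)/2\rfloor=\delta_j$, and whose numerator $N_j$ has degree $r_j-1-\delta_j$. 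Truncating the exponent at $k=r_j$, as in (\ref{ExponentialOfPolySolve}), perturbs only the coefficients of $\tau^k$ with $k>r_j$; hence, on the branch, $A_{j}^{+}(k)$ equals the coefficient of $\tau^k$ in $R_j(\tau)$ for every $k\le r_j$, which is exactly the range of indices $r_j-2\delta_j+i+j$ appearing in (\ref{HankelDet}). Writing $D_j(\tau)=\sum_{c=0}^{\delta_j}d_c\tau^c$ with $d_0=1$ and $d_{\delta_j}\ne0$ and clearing denominators in $N_j=D_jR_j$ yields the linear recurrence $\sum_{c=0}^{\delta_j}d_c\,A_{j}^{+}(\ell-c)=0$ for every $\ell>\deg N_j$, i.e. for $\ell\ge r_j-\delta_j$. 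In the $(\delta_j{+}1)\times(\delta_j{+}1)$ matrix of (\ref{HankelDet}), the column combination with weights $d_{\delta_j},d_{\delta_j-1},\dots,d_0$ has $i$‑th entry $\sum_{c=0}^{\delta_j}d_c\,A_{j}^{+}\big((r_j-\delta_j+i)-c\big)=0$ for $i=0,\dots,\delta_j$, since $r_j-\delta_j+i\ge r_j-\delta_j$; as $d_0=1$, this is a nontrivial dependence among the columns, so (\ref{HankelDet}) vanishes identically on the branch, the shift $r_j-2\delta_j$ being precisely the smallest one for which every recurrence index invoked lies in the valid range. Since $(s_1,\dots,s_{r_j-1})$ ranges over an irreducible parameter space and (\ref{ParamRepresentationBoundary}) is polynomial, the Zariski closure of the branch is an irreducible hypersurface; being a nonzero polynomial (see below) that vanishes on it, (\ref{HankelDet}) is a scalar multiple of the defining polynomial $p_j^{\textup{upper}}$, and equality of degrees — hence absence of extraneous factors — follows from generic injectivity of (\ref{ParamRepresentationBoundary}).

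For the degree, regard $A_{j}^{+}(k)$ as an element of $\mathbb{R}[x_1,\dots,x_{r_j}]$: each $\lambda_{j}^{+}(k)$ is affine in $x_{r_j-k+1}$, with $\lambda_{j}^{+}(1)$ affine in $x_{r_j}$ of leading coefficient $1/(2\mu_j)$, and the only contribution to the coefficient of $\tau^k$ in $\exp(-\sum_{k'}\lambda_{j}^{+}(k')\tau^{k'}/k')$ of full $\bm{x}$‑degree $k$ is $(-\lambda_{j}^{+}(1))^k/k!$; hence $\deg_{\bm{x}}A_{j}^{+}(k)=k$ with top part a nonzero constant times $x_{r_j}^{k}$. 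Consequently every term in the Leibniz expansion of (\ref{HankelDet}) has $\bm{x}$‑degree $\sum_{i=0}^{\delta_j}(r_j-2\delta_j+i+\sigma(i))=(\delta_j{+}1)(r_j-2\delta_j)+\delta_j(\delta_j{+}1)=(\delta_j{+}1)(r_j-\delta_j)$, independent of the permutation $\sigma$; and the coefficient of $x_{r_j}^{(\delta_j{+}1)(r_j-\delta_j)}$ in (\ref{HankelDet}) is a nonzero constant times $\det\big[1/(r_j-2\delta_j+i+j)!\big]_{i,j=0}^{\delta_j}$, which is nonzero (multiply row $i$ by $(r_j-2\delta_j+i+\delta_j)!$, reduce each column to a pure power of $r_j-2\delta_j+i$ by column operations, and recognise a Vandermonde determinant in the distinct nodes $r_j-2\delta_j,\dots,r_j-\delta_j$). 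Hence (\ref{HankelDet}) is a nonzero polynomial of degree exactly $(\delta_j{+}1)(r_j-\delta_j)$, as claimed.

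The main obstacle is the step equating the vanishing of the size‑$(\delta_j{+}1)$ Hankel determinant with the precise shift $r_j-2\delta_j$ to the Kronecker rationality condition for the denominator‑degree‑$\delta_j$ rational function obtained above, and verifying that this shift is minimal: a larger shift still forces vanishing but of a higher‑degree, over‑parameterised polynomial, while a smaller one need not force vanishing at all. A secondary point demanding care is the sign bookkeeping in the $\pm$ labels, so that \emph{both} bounding hypersurfaces become attached to a rational function with a degree‑$\delta_j$ (and not degree‑$\delta_j{+}1$) denominator.
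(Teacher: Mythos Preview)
Your argument follows the same route as the paper's: identify the generating function \eqref{ExponentialOfPolySolve} with the explicit rational function $\prod_q(1-s_q\tau)^{(-1)^{q+1}}$ via the signed Newton power sums $\lambda_j(k)=\sum_q(-1)^{q+1}s_q^{k}$, and then invoke Kronecker's criterion to force the size-$(\delta_j{+}1)$ Hankel determinant to vanish. You supply considerably more detail than the paper does---the truncation argument at $k=r_j$, the explicit column-dependence proof of the vanishing, and the full degree computation with the Vandermonde check that the leading coefficient is nonzero---and you rightly flag the lower-branch sign bookkeeping, which the paper's proof treats only by writing out one branch explicitly and asserting the other follows after reinserting the $\pm$ superscripts.
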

\begin{proof}
To ease notation, let us drop the superscript $\pm$ from $\rho_{r_j-k+1}^{\pm},A^{\pm}_{j}$ and $\lambda_{j}^{\pm}(k)$ in (\ref{defrho}). These superscripts can be added back at the end of the proof, without loss of generality.

Following \cite{381335}, we next define the sequence $A_j(k)$ (the sequence is with respect to index $k$) in terms of $s_1,s_2,\dots,s_{r_{j}-1}$ via the generating function (see e.g.,{{\cite[Ch. 1]{wilf2005generatingfunctionology}}})
\begin{align}
F_j(\tau)=\sum_{k\geq 0} A_j(k) \tau^k=\frac{(1-s_1\tau)(1-s_3\tau)\cdots}{(1-s_2\tau)(1-s_4\tau)\cdots}.
\label{GeneratingFn}	
\end{align}
Taking the logarithmic derivative of (\ref{GeneratingFn}), and using the generating functions $(1-s_{q}\tau)^{-1} = \sum_{k\geq 0}\left(s_{q}\tau\right)^{k}$ for all $q=1,\hdots,r_j-1$, yields 
\begin{align}
\!\!\frac{F_{j}^{\prime}(\tau)}{F_{j}(\tau)} = -s_{1}\!\sum_{k\geq 0}\!\left(s_{1}\tau\right)^{k} \!+s_{2}\!\sum_{k\geq 0}\!\left(s_{2}\tau\right)^{k} \!-s_{3}\!\sum_{k\geq 0}\!\left(s_{3}\tau\right)^{k} \!+ \hdots.
\label{LogarithmicDerivative}	
\end{align}
In \eqref{LogarithmicDerivative}, the superscript $^{\prime}$ denotes derivative with respect to $\tau$. Integrating (\ref{LogarithmicDerivative}) with respect to $\tau$, we get
\begin{align}
F_j(\tau) = \exp\left(-\sum_{k=1}^{r_j}\frac{\lambda_{j}(k)}{k}\tau^{k}\right).
\label{ExponentialOfPoly}	
\end{align}
Equating (\ref{GeneratingFn}) and (\ref{ExponentialOfPoly}), we obtain  (\ref{ExponentialOfPolySolve}).

On the other hand, since the generating function (\ref{GeneratingFn}) is a rational function with denominator polynomial of degree $\delta_j$, the Hankel determinant in (\ref{HankelDet}) vanishes\cite{kronecker1881theorie},\cite[p. 5, Lemma III]{salem1983algebraic}. Finally, reverting back the $\lambda_j$'s to the $\rho_j$'s, and inserting back the plus-minus superscripts, result in the desired implicit polynomial, say $\wp(\rho_{j,1}^{\pm},\rho_{j,2}^{\pm},\hdots,\rho_{j,r_j}^{\pm})$, of degree $(\delta_j+1)(r_j-\delta_j)$.
\end{proof}


To illustrate Theorem \ref{Thm:Implicitization}, let $d=3$ and $m=1$; the latter allows us to drop the subscript $j$. Then (\ref{HankelDet}) becomes
\begin{align}
\det\left(\begin{bmatrix}
A^{\pm}(1) & A^{\pm}(2)\\
A^{\pm}(2) & A^{\pm}(3)	
\end{bmatrix}
\right) = 0.
\label{HankelDetFor3d}	
\end{align}
In this case, equating (\ref{GeneratingFn}) and (\ref{ExponentialOfPoly}) gives
\begin{align*}
&A^{\pm}(1)=-\lambda^{\pm}(1),\; A^{\pm}(2)=\frac{1}{2}\lambda^{\pm}({1})^{2}-\frac{1}{2}\lambda^{\pm}({2}), \;\\ &A^{\pm}({3})=-\frac{1}{6}\lambda^{\pm}({1})^{3}+\frac{1}{2}\lambda^{\pm}({1})\lambda^{\pm}({2})-\frac{1}{3}\lambda^{\pm}({3}).
\end{align*}
Substituting the above back in (\ref{HankelDetFor3d}) yields the quartic polynomial $\lambda^{\pm}({1})^{4} + 3\lambda^{\pm}({2})^{2}-4\lambda^{\pm}({3})\lambda^{\pm}({1})=0$, which under the mapping $(\lambda^{\pm}(1),\lambda^{\pm}(2),\lambda^{\pm}(3))\mapsto (\rho^{\pm}(3),\rho^{\pm}(2),\rho^{\pm}(1))$, yields degree 4 implicitized polynomial 
\begin{align}
&\wp(\rho^{\pm}(1),\rho^{\pm}(2),\rho^{\pm}(3))\nonumber\\
&=\rho^{\pm}({3})^{4}- 4 \rho^{\pm}({3})\rho^{\pm}({1}) + 3\rho^{\pm}({2})^{2} = 0.
\label{pi3d}	
\end{align} 
For $k=1,2,3$, substituting for the $\rho^{\pm}({1}),\rho^{\pm}({2}),\rho^{\pm}({3})$ in (\ref{pi3d}) from (\ref{defrho}), allows us to recover $p_{j}^{\text{upper}}$ and $p_{j}^{\text{lower}}$.

We can verify (\ref{pi3d}) using elementary algebra by eliminating the parameters $\left(s_{1},s_{2}\right)$ from (\ref{TripleIntegratorParametric}). However, it is practically impossible to derive the implicitization via brute force algebra for $d=4$ or higher.

In summary, (\ref{HankelDet}) gives the implicitization of the bounding hypersurfaces of the single input integrator reach set (up to the change of variables). The implicitization of the hypersurfaces in multi-input case, then, is the Cartesian product of these single input implicit hypersurfaces.

\section{Taxonomy of the Integrator Reach Set}\label{sec:Classification}
In this Section, we address the question: what type of compact convex set the integrator reach set resulting from \eqref{BoxInputSet} is? This question is particularly appealing since several types of convex sets are well-studied in the convex analysis literature. In systems-control literature too, the knowledge of the type of convex set (e.g., semialgebraic, spectrahedron) under investigation, is often leveraged to facilitate analysis, and to design specialized algorithms.

We recall the concept of semialgebraic sets. A set in $\mathbb{R}^{d}$ is called \emph{basic semialgebraic} if it can be written as a finite conjunction of polynomial inequalities and equalities, the polynomials being in $\mathbb{R}\left[x_{1},\hdots,x_{d}\right]$. Finite union of basic semialgebraic sets is called a \emph{semialgebraic set}. A semialgebraic set need not be basic semialgebriac; see e.g., \cite[Example 2.2]{vinzant2020geometry}. Our first taxonomy result is the following.

\begin{theorem}\label{Thm:semialgebraic}
The integrator reach set (\ref{ReachSet}) with $\mathcal{X}_{0}\equiv\{\bm{x}_{0}\}$ and box-valued input set $\mathcal{U}$, is semialgebraic at any time $t$.	
\end{theorem}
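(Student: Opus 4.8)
The plan is to peel off the product structure first and then feed in the implicitization results from Section~\ref{subsec:ImplicitizationSec}. By \eqref{CartesianProduct} we have $\mathcal{R}=\mathcal{R}_{1}\times\cdots\times\mathcal{R}_{m}$ with $\mathcal{R}_{j}\subset\mathbb{R}^{r_{j}}$, and a finite Cartesian product of semialgebraic sets is semialgebraic: lifting each polynomial that defines $\mathcal{R}_{j}$ to an element of $\mathbb{R}[x_{1},\dots,x_{d}]$ depending only on the $r_{j}$ coordinates of the $j$th block, and taking the conjunction of all these (in)equalities over $j=1,\dots,m$, cuts out exactly $\mathcal{R}_{1}\times\cdots\times\mathcal{R}_{m}$. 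Hence it suffices to prove that each single-input block $\mathcal{R}_{j}$ is semialgebraic.

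For a fixed $j$ the cleanest argument combines Corollary~\ref{Corollary:TwoBoundingSurfaces} with Theorem~\ref{Thm:Implicitization}: the former gives $\mathcal{R}_{j}=\{\bm{x}\in\mathbb{R}^{r_{j}}\mid p_{j}^{\textup{upper}}(\bm{x})\le 0,\ p_{j}^{\textup{lower}}(\bm{x})\le 0\}$, and the latter identifies $p_{j}^{\textup{upper}},p_{j}^{\textup{lower}}$ as genuine polynomials in $\mathbb{R}[x_{1},\dots,x_{r_{j}}]$ (of degree $(\delta_{j}+1)(r_{j}-\delta_{j})$, obtained from the vanishing Hankel determinant \eqref{HankelDet} after the affine substitution \eqref{defrho}). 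A set defined by a finite conjunction of polynomial inequalities is by definition basic semialgebraic, so each $\mathcal{R}_{j}$ is basic semialgebraic, and therefore $\mathcal{R}$ is semialgebraic --- in fact basic semialgebraic, being again such a finite conjunction after the lifting described above. This is the whole proof modulo the earlier results, so the only thing I would double-check is that the implicitized polynomials of Theorem~\ref{Thm:Implicitization} cut out precisely the bounding hypersurfaces (and with the correct sign convention), rather than a strictly larger real variety; this is exactly what Corollary~\ref{Corollary:TwoBoundingSurfaces} asserts, and it is the one place where a subtle gap could hide.

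As a robust fallback that sidesteps the exact sign form, I would use Tarski--Seidenberg directly on the parametrization \eqref{ParamRepresentationBoundary}: the parameter domain $\Delta_{j}:=\{(s_{1},\dots,s_{r_{j}-1})\mid 0\le s_{1}\le\cdots\le s_{r_{j}-1}\le t\}$ is a basic closed semialgebraic simplex, the two sign choices in \eqref{ParamRepresentationBoundary} yield polynomial (hence semialgebraic) maps $\Delta_{j}\to\mathbb{R}^{r_{j}}$, so their images are semialgebraic and thus $\partial\mathcal{R}_{j}$ is semialgebraic; then, since $\mathcal{R}_{j}$ is a compact convex body, after translating an interior point to the origin (semialgebraicity is translation-invariant) one may write $\mathcal{R}_{j}=\{\bm{0}\}\cup\{\bm{x}\neq\bm{0}\mid \exists\,\sigma\ge 1,\ \sigma\bm{x}\in\partial\mathcal{R}_{j}\}$ and eliminate the quantifier on $\sigma$ via Tarski--Seidenberg to conclude that $\mathcal{R}_{j}$, and hence $\mathcal{R}$, is semialgebraic. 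The main obstacle, then, is not in the semialgebraicity deduction itself --- which is short --- but in being sure that the algebraic description of the boundary inherited from Theorems~\ref{Thm:ParametricRepresentationOfBoundaryPoint} and \ref{Thm:Implicitization} is tight; given that, the statement follows immediately.
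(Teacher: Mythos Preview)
Your proposal is correct and follows essentially the same line as the paper: the paper's proof simply notes that the implicitization of Sec.~\ref{subsec:ImplicitizationSec} shows $p_{j}^{\textup{upper}},p_{j}^{\textup{lower}}\in\mathbb{R}[x_{1},\dots,x_{r_{j}}]$ for each $j$, whence semialgebraicity, and it cites an alternative proof via the parametric representation---precisely your Tarski--Seidenberg fallback. Your write-up is more explicit about the Cartesian product reduction and the potential sign/tightness subtlety, but the core argument is the same.
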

\begin{proof}
The implicitization derived in Sec. \ref{subsec:ImplicitizationSec}, by construction, demonstrated that  $p_{j}^{\text{upper}},p_{j}^{\text{lower}}\in\mathbb{R}\left[x_{1},\hdots,x_{r_{j}}\right]$ for all $j=1,\hdots,m$. In words, $p_{j}^{\text{upper}},p_{j}^{\text{lower}}$ are real algebraic hypersurfaces for all $j=1,\hdots,m$. For a different proof that uses the parametric, instead of the implicit representation of the boundary, see \cite[Appendix E]{haddad2021curious}. Alternatively, the statement can be proved by applying Tarski-Seidenberg theorem \cite[Ch. 1]{bochnak2013real} on \eqref{ParamRepresentationBoundary}.
\end{proof}

So far, we have established that the compact convex set $\mathcal{R}\left(\{\bm{x}_{0}\},t\right)$ is semialgebraic (Theorem \ref{Thm:semialgebraic}), and a translated zonoid (Remark \ref{Remark1SptFnFormula}). Convex semialgebraic sets contain a well-known subclass called \emph{spectrahedra}, also referred to as \emph{linear matrix inequality (LMI) representable sets}. Spectrahedra are affine slices of the symmetric positive semidefinite cone. 

The projections of spectrahedra are referred to as \emph{spectrahedral shadows}, which subsume the class of spectrahedra. It is possible that a compact convex set is spectrahedral shadow, i.e., admits LMI representation when lifted to a higher dimensional space, but on its own, not a spectrahedron in the original dimensions. For an example of spectrahedral shadow that is not a spectrahedron, see \cite[p. 23]{helton2010semidefinite}. 

Helton and Vinnikov \cite[Thm. 3.1]{helton2007linear} showed that for a $d$ dimensional compact convex set to be spectrahedron, a necessary condition is \emph{rigid convexity}. A compact semialgebriac set is rigidly convex if the number of intersections made by a generic line passing through an interior point of the set, with its real algebraic boundary is equal to the degree of the bounding algebraic hypersurfaces; see \cite[Sec. 3.1 and 3.2]{helton2007linear}. 

We know from Sec. \ref{subsec:ImplicitizationSec} that the $d$ dimensional single input (i.e., $m=1$) integrator reach set has bounding algebraic surfaces of degree $(\lfloor \frac{d-1}{2}\rfloor + 1)(d-\lfloor \frac{d-1}{2}\rfloor)$. In particular, for $d=2$, Fig. \ref{sfig1} shows that a generic line has $4$ intersections with the bounding real algebraic curves whereas from Theorem \ref{Thm:Implicitization}, we know that $p^{\text{upper}},p^{\text{lower}}$ in this case, are degree $2$ polynomials. 

Likewise, for $d=3$ and $m=1$, Fig. \ref{sfig2} reveals that a generic line has $6$ intersections with the bounding real algebraic surfaces whereas from (\ref{pi3d}), we know that the polynomials $p^{\text{upper}},p^{\text{lower}}$ in this case, are of degree $4$. So the integrator reach set box-valued input set $\mathcal{U}$, is not a spectrahedron.

\begin{figure}[t]
\begin{subfigure}{0.235\textwidth}
  \centering
  \includegraphics[width=0.94\linewidth]{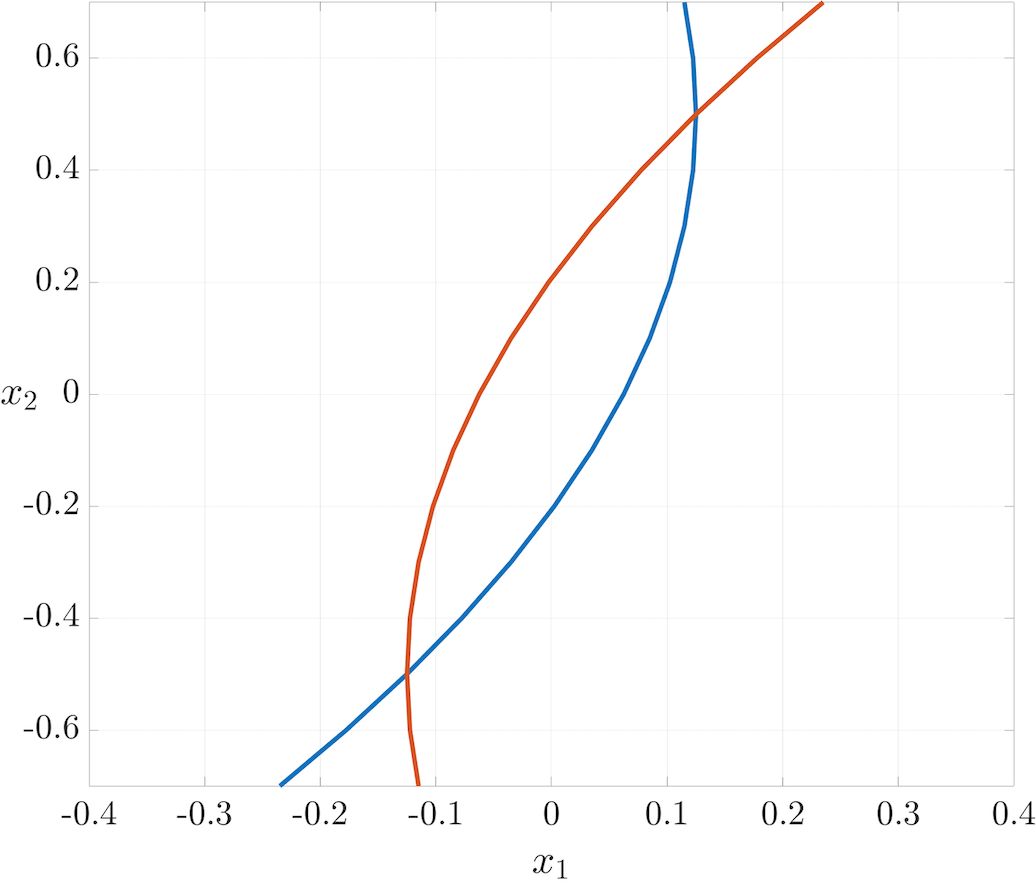}
  \caption{{\small{Real algebraic curves $p^{\text{upper}},\allowbreak p^{\text{lower}}$ for  the double integrator.}}}
  \label{sfig1}
\end{subfigure}\hspace*{0.15in}
\begin{subfigure}{.235\textwidth}
  \centering
  \includegraphics[width=\linewidth]{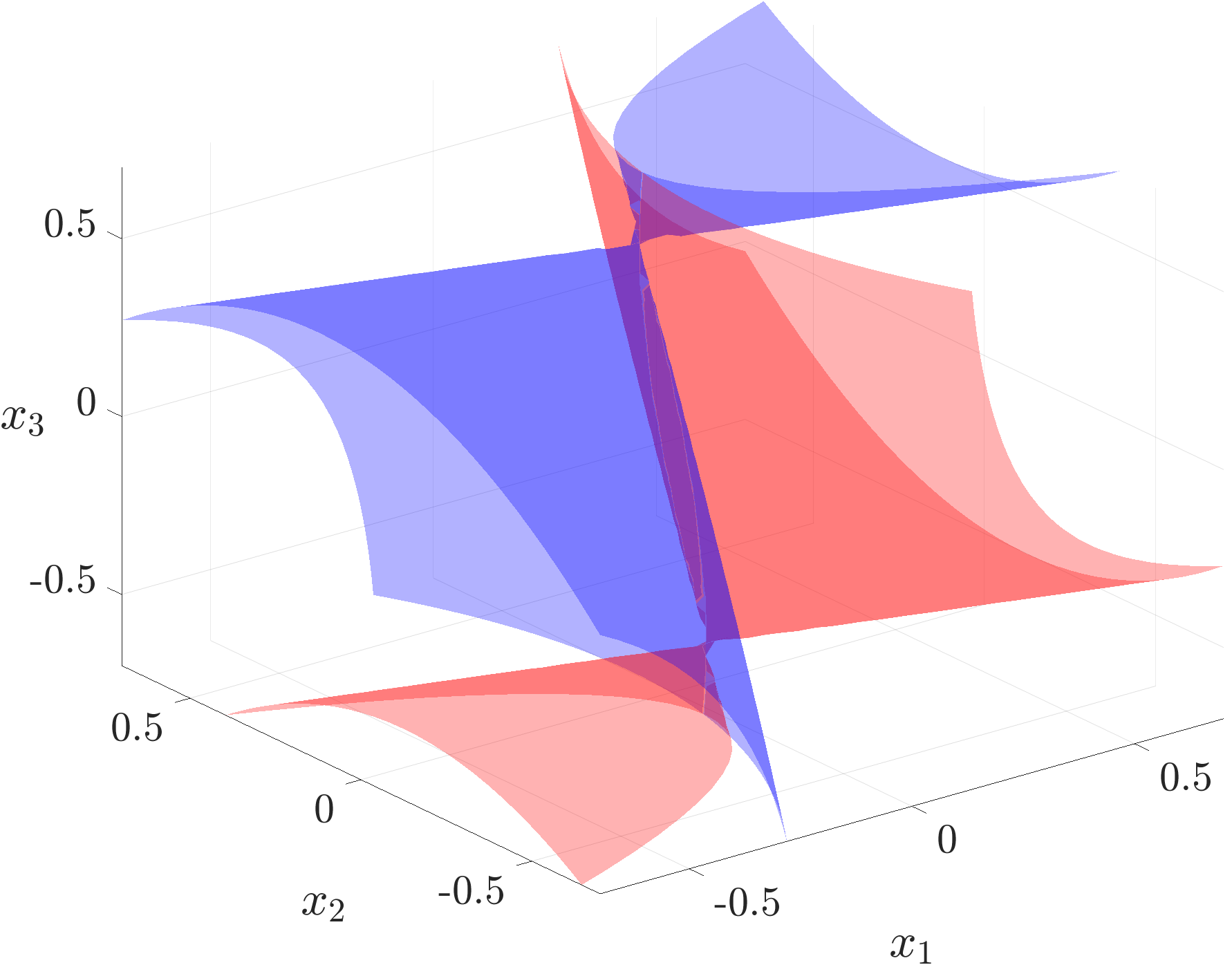}
  \caption{{\small{Real algebraic surfaces $p^{\text{upper}},\allowbreak p^{\text{lower}}$ for the triple integrator.}}}
  \label{sfig2}
\end{subfigure}
\caption{{\small{The bounding polynomials for the double and triple integrator reach sets at $t=0.5$ with $\bm{x}_{0}=\bm{0}$ and $\mu=1$.}}}
\label{fig:Boundaries}
\vspace*{-0.1in}
\end{figure}

\begin{figure}[t]
        \centering
        \includegraphics[width=0.9\linewidth]{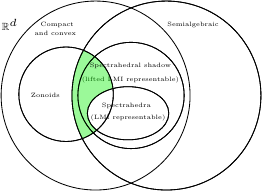}
        \caption{{\small{The summary of the taxonomy for the integrator reach set with box-valued input set $\mathcal{U}$.}}}
\vspace*{-0.2in}
\label{fig:taxonomysummary}
\end{figure}

An interesting question is the following: could this reach set be spectrahedral shadow? Although some calculations show that \emph{sufficient} conditions as in \cite{helton2010semidefinite} do not seem to hold, the authors do not have conclusive answer to this matter. The difficulty here, to the best of the authors' knowledge, is that the necessary conditions for a compact semialgebraic set to be spectrahedral shadow, is not available in the current literature. This makes it hard to falsify this possibility for the integrator reach set.

We graphically summarize our taxonomy results in Fig. \ref{fig:taxonomysummary}; the highlighted region shows where the integrator reach set belongs. Our future work will explore the possibility of further narrowing down the highlighted region.




\section{Conclusions and Future Work}\label{sec:Conclusion}

The present paper is part of a research program \cite{haddad2020convex,haddad2021curious} to understand the specific geometry of the integrator reach sets. This is motivated by the fact that integrator reach sets feature prominently for benchmarking the performance of reach set over-approximation algorithms, and that they may be used to over-approximate the forward reach sets of differentially flat nonlinear systems in the normal coordinates (and then numerically map the boundary back in original coorodinates via known coordinate transforms).

In the present paper, we deduced a closed-form formula for the support function of the forward reach set of the integrator dynamics with box-valued uncertainties in the control inputs. Using the support function formula, we then derived analytic expressions of the boundary of these compact convex sets in both parametric and implicit forms. We then delved into the taxonomy question. We argued that this reach set is a translated zonoid, semialgebraic, and not a spectrahedron. 

Our future work will investigate the possibility of further narrowing down the highlighted region in Fig. \ref{fig:taxonomysummary}. In addition, using the ideas presented herein to design algorithms for computing or tightly over-approximating the forward reach sets of differentially flat systems, is a topic of the authors' ongoing research. 
\balance

\bibliographystyle{IEEEtran}
\bibliography{References.bib}

%




\end{document}